\newcommand{\ds}{\displaystyle}
\newcommand{\reals}{\mathbb{R}}
\newcommand{\nn}{\nonumber}
\newcommand{\cE}{{\mathcal{E}}}
\newcommand{\cP}{\mathcal{P}}
\newcommand{\R}{\mathbb{R}}
\newcommand{\bu}{\mathbf u}
\newcommand{\bv}{\mathbf v}
\newcommand{\bw}{\mathbf w}
\newcommand{\bF}{\mathbf F}
\newcommand{\bV}{\mathbf V}
\newcommand{\grad}{\nabla}
\newcommand{\Om}{\Omega}
\theoremstyle{plain}
\newtheorem{theorem}{Theorem}[section]
\newtheorem{lemma}[theorem]{Lemma}
\newtheorem{proposition}[theorem]{Proposition}
\newtheorem{corollary}[theorem]{Corollary}
\newtheorem{assumption}{Assumption}
\newtheorem{definition}{Definition}
\theoremstyle{remark}
\newtheorem{remark}{Remark}[section]
\numberwithin{equation}{section} \numberwithin{theorem}{section}
\numberwithin{remark}{section} \linespread{1.05}
\begin{document}

\title{Weak Solutions in Nonlinear Poroelasticity \\ with Incompressible Constituents}

\author{Lorena Bociu\footnote{2311 Stinson Dr., North Carolina State University, Raleigh, NC, 27695; {\em lvbociu@ncsu.edu}}\hskip 2cm Boris Muha\footnote{University of Zagreb, Faculty of Mathematics, Croatia;~ {\em borism@math.hr}}  \hskip 2cm Justin T. Webster\footnote{1000 Hilltop Dr., University of Maryland, Baltimore County, Baltimore, MD, 21250;~ {\em websterj@umbc.edu}} }

\maketitle

\begin{abstract}
\noindent We consider quasi-static nonlinear poroelastic systems with applications in biomechanics and, in particular, tissue perfusion. The nonlinear permeability is taken to be dependent on solid dilation, and physical types of boundary conditions (Dirichlet, Neumann, and mixed) for the fluid pressure are considered. The system under consideration represents a nonlinear, implicit, degenerate evolution problem, which falls outside of the well-known implicit semigroup monotone theory. Previous literature related to proving existence of weak solutions for these systems is based on constructing solutions as limits of approximations, and energy estimates are obtained only for the constructed solutions. In comparison, in this treatment we provide for the first time a direct, fixed point strategy for proving the existence of weak solutions, which is made possible by a novel result on the uniqueness of weak { solutions of} the associated linear system (where the permeability { is} given as a function of space and time). The uniqueness proof for the associated linear problem is based on novel energy estimates for arbitrary weak solutions, rather than just for constructed solutions. The results of this work provide a foundation for addressing strong solutions, as well as uniqueness of weak solutions for nonlinear {poroelastic} systems.    
\vskip.25cm

\noindent Keywords: {nonlinear poroelasticity, implicit evolution equations, quasilinear parabolic, weak solutions, energy methods, incompressible constituents}
\vskip.25cm
\noindent
{\em 2020 AMS}: 35M13, 35A01, 35B65, 35Q86, 35Q92, 74F10, 76S05
\vskip.4cm
\noindent Acknowledgments: L. Bociu was partially supported by NSF-DMS 1555062 (CAREER). J.T.~Webster was partially supported by NSF-DMS 1907620. B. Muha was partially supported by the Croatian Science Foundation project IP-2019-04-1140. B. Muha and J.T. Webster thank UMBC's Office of Research Development for the generous support through a SURFF grant, as well as the Department of Mathematics and Statistics for facilitating their collaboration in October and November, 2020. The authors wish to sincerely thank the anonymous referees for their incisive and close reading of the manuscript, and several suggestions which dramatically improved the clarity and quality of the paper.
\end{abstract}

\section{Introduction}
The fully dynamic Biot model in poroelasticity is a coupled, mixed hyperbolic-parabolic system that describes the behavior of a deformable saturated porous medium. The momentum balance equations for the elastic displacement $\mathbf u$
of the medium and the mass balance equation for the fluid pressure $p$, under the assumptions of full saturation and small deformations for the homogeneous porous medium,
are given by
\begin{equation}\label{Biot_dyn}
\begin{cases}
 \rho \mathbf u_{tt} -\mu \Delta \mathbf{u} - (\lambda + \mu) \grad(\grad \cdot \mathbf{u}) + \alpha \grad p = \bF(x,t),\\
 (c_0 p + \alpha  \nabla \cdot \mathbf u)_t - \nabla \cdot k\nabla p = S(x,t).
 \end{cases}
 \end{equation}
 The key parameters in the system are: the density of the porous and permeable medium $\rho > 0$, the Lam\'e parameters $\lambda$ and $\mu$, the Biot-Willis constant $\alpha > 0$ which accounts for the pressure-deformation coupling, and the constrained storage coefficient $c_0 \geq 0$ which  combines the porosity of the
medium and the compressibility of {  both the fluid and solid constituents}\cite{show1}. The given function $\mathbf F$ represents an elastic body force, while $S$ is a given fluid source. This coupled system 
can describe the settlement of soils under load, wave propagation
in fluid-saturated porous media, as well as { perfusion in tissues and organs}. Consequently, it has received a lot of attention in geophysics and civil engineering, and industrial and biomedical applications \cite{biot1, biot3, biot2, verrujit, detournay, coussy, lewis, rice, rudnicki, yamamoto, bosco, canic, cardoso, causin14,  cowin, prosi, terzaghi}. 

In most biological and biomechanical applications, the inertial effects (the accelerations of both fluid and
solid) are negligible, so that one can focus on an elastic {\it quasi-static} deformation of the fluid-saturated porous medium \cite{chapelle, khaled, causin14, detournay, guidoboni, bgsw, saccobook}. In this scenario, the coupling is of elliptic-parabolic type, where the small deformations of
the solid matrix are described by the Navier equations of linear elasticity, and
the diffusive fluid flow is described by Duhamel's equation:
\begin{equation} \label{Biot_abstract} \begin{cases} \mathcal E\mathbf u { + \alpha \nabla p} =\mathbf F(x,t)\\ [c_0 p + \alpha \nabla \cdot \mathbf u]_t+ Ap=S(x,t),\end{cases}\end{equation}
where $\mathcal E$ is an appropriate ``elasticity" operator (described precisely in Section \ref{operators}), while $A = - \nabla \cdot [k \nabla]$ is { the} diffusion operator. Moreover, due to the fact that biological tissues have a mass density close to that of water, one can work under the assumption of incompressible solid and fluid constituents.\footnote{The solid and the 
fluid phases cannot undergo volume changes at the
microscale.} Mathematically, this assumption translates into the following parameter simplifications: $c_0 = 0$ and $\alpha =1$ \cite{detournay}. In this case, the pressure equation in \eqref{Biot_abstract} can degenerate { where} $\nabla \cdot \mathbf u \equiv0$. The coupled system \eqref{Biot_abstract} can be reduced to an implicit evolution equation by solving the elliptic equation for displacement $\bu$ in terms of pressure $p$. There has been great interest in {\it implicit evolution equations} \cite{showmono,showold,indiana,show1} (and references therein). In fact, general theories have been developed for implicit systems of monotone type  \begin{equation}\label{abstractimplicit}[Bp]_t+Ap \ni S,\end{equation} where the operator $A$ and the pressure-to-dilation operator $B$ may in fact be nonlinear \cite{dbshow,show2}. As noted above, when $B$ has a non-trivial kernel, this abstract system \eqref{abstractimplicit} is referred to as {\em degenerate} \cite{show1}. In the case of compressible constituents $c_0>0$, the operator $c_0I + B$ becomes coercive \cite{auriault} and invertible on $L^2$, which permits simplification upon inversion. This case is referred to in \cite{showold,showmono,ellie} as a ``regular" implicit equation. Thus the case of fluid-solid mixtures with compressible constituents ($c_0 >0$) is fundamentally different { from} the scenario with incompressible fluid and solid constituents (see \cite{BW21} for more detailed discussion).

A new challenge present in systems like \eqref{Biot_abstract}, motivated by biological structures like tissues, organs, cartilages and bones, is the fact that the permeability $k$ is not a constant parameter; rather, it is a function that depends on the pore architecture inside the body as well as
the properties of the fluid \cite{whitaker}. For example, if a Newtonian fluid flows in the interstitial spaces of a pack of spherical
particles, then the Carman-Kozeny formula is used, which states that $k$ is a nonlinear function of the solid dilation $\nabla \cdot \bu$, given by $\ds k(y) \sim y^3(1- y)^{-2}$ \cite{hsu}.  On the other hand, if a Newtonian fluid flows inside cylindrical pores, then the formula for capillary beds states that permeability is proportional to a quadratic function of $\nabla\cdot\bu$ \cite{causin14}. This nonlinear dependence of permeability on solid dilation introduces a quasilinearity into the dynamics that is not monotone in nature \cite{bgsw,cao,BW21}. The latter fact disqualifies the nonlinear theory which has been developed in the above mentioned mathematical works { \cite{show2,showmono,dbshow}}, where the nonlinearity is monotone and depends directly on the pressure $p$.
\vskip.2cm

\noindent {\bf Main Contributions.} In this treatment we focus on quasi-static systems like \eqref{Biot_abstract} with incompressible constituents ($c_0 = 0$), nonlinear permeability $k$ dependent on solid dilation, and physically-motivated pressure boundary conditions (Dirichlet, Neumann, and mixed). 
For a complete description of the PDE system under consideration, see  Section 2.1.   Existence and uniqueness criteria for weak solutions to these systems have been addressed recently in \cite{bgsw,BW21}. The proof presented in \cite{bgsw} is constructive, and  based on Rothe's method. The reference  \cite{BW21} shows existence of weak solutions through a multi-valued map fixed point  argument in the simplified scenario of homogeneous boundary conditions for both solid displacement and fluid pressure. {\it In comparison, the present treatment provides a straightforward approach based on a fixed point map strategy, made possible by a novel result on the uniqueness of weak solution to the associated linear coupled system with given permeability { $K(\mathbf x,t)$}}.  More specifically, 
as a preliminary step, we consider the analysis of a linear, time-dependent poroelasticity system, where the nonlinearity can be replaced with a given function of space and time: $$-\text{div} [k(\nabla \cdot \bu) \nabla]~~  \mapsto ~~ -\text{div}[{ K(\mathbf x,t)}\nabla].$$ 
Then one deals with an implicit, time-dependent linear parabolic problem $$[B{ p}]_t+A(t){ p}=S,$$  
where the abstract work in \cite[Chapter III.3]{showmono} can be invoked to obtain existence of weak solutions. Regarding the issue of uniqueness of weak solution, from the point of view of abstract variational theory \cite{showmono} or discretization approaches \cite{zenisek,cao,bgsw}, one inherits the critical problem that only {\em constructed} weak solutions satisfy energy estimates. Existing theory requires additional smoothness (time differentiability) of $k$ in order to circumvent this issue \cite[pp.115--117]{showmono}, which unfortunately 
is not available for the nonlinear problem of interest. 

The crux of the matter here in proving uniqueness of weak solutions for the linear problem (without extra regularity assumptions) is obtaining an appropriate energy estimate for arbitrary weak solutions, rather than for just constructed solutions (as in \cite{BW21, showmono}). Formally, one can see from the dynamic Biot system \eqref{Biot_abstract} that the ``natural" elasticity multiplier is $\mathbf u_t$, as it elicits cancellation of ``cross" coupled terms; $\mathbf u_t$ remains the desired multiplier even in the quasi-static scenario. However, there is no clear temporal regularity associated to $\bu_t$ in the latter case. Additionally, the implicit presentation \eqref{abstractimplicit} demonstrates a peculiarity in passing between temporal and spatial regularity in the equation, which must take place through the pressure-to-dilation $B$ operator. To address these issues, inspired by \cite{auriault,show1,showold}, we effectively ``mod out" $\text{Ker}(B)$ in the variational structure of the problem, in conjunction with a time mollification in the appropriate operator-theoretic framework. We also take advantage of the connection between the reduced, implicit formulation and  the full quasi-static Biot formulation, to exploit the divergence structure (embedded Stokes problem) of the equations. 
\\[.1cm]
\noindent {\bf To summarize}, we obtain these novel results: (i) uniqueness of weak solution for the time-dependent linear poroelasticity problem with {  $A(t)=-\nabla\cdot[K(\mathbf x,t)\nabla]$}, without requiring additional time regularity on the permeability by providing (ii) a priori estimates for arbitrary weak solutions, rather than for just the constructed solutions. The aforementioned linear uniqueness problem is resolved in a way that can be utilized in order to obtain (iii) a direct fixed-point argument for the Biot system with permeability depending nonlinearly on the solid dilation, as was not possible in previous literature \cite{BW21,bgsw,cao}. Additionally, we provide the first, clear functional framework for weak solutions, including a justification of the regularity and type of initial data taken, while addressing the degeneracy induced by the incompressible constituents ($c_0=0$) through appropriate modifications of the pressure state space.

\section{Main Results and Discussion}
\subsection{PDE Model of Nonlinear Poroelasticity}

We relegate our attention to the physical assumptions of
full saturation of the porous media, negligible inertia, small deformations, and incompressible mixture components {\cite{bgsw} (and references therein)}. Let $\Omega \subset \mathbb{R}^3$ be the fluid-solid mixture domain, {  of class $\mathcal C^2$, with boundary} $ \Gamma = \partial \Omega=\overline{\Gamma_D\cup\Gamma_N}$ and unit outward normal $\mathbf n$. 
Here $\Gamma_D$ and $\Gamma_N$ are Dirichlet and Neumann { parts} of the boundary ({\em with respect to the pressure variable}), respectively, and $\Gamma_N\cap\Gamma_D=\emptyset$ (although we permit their closures to intersect). The balance of momentum for the fluid-solid mixture and the balance of mass for the fluid are given by
\begin{eqnarray} 
&{  - \nabla\cdot\mathbf T(\bu,p) = \bF} \hspace{.4cm} &\text{in} ~~~\Omega\times(0,T) \label{1}\label{Eq:elasticity}\\ 
& \zeta_t+ \nabla\cdot \mathbf{v}=S&\text{in} ~~~\Omega\times(0,T).  \label{2}\label{Eq:pressure}
\end{eqnarray}
The notation used for the system variables along with the constitutive relations are described below. The variable $\mathbf{u}$ represents the solid displacement, while $p$ is the Darcy fluid pressure and $\mathbf v$ is the associated { Darcy velocity of the fluid}. 

We work here in the mathematically simplified framework of homogeneous Dirichlet conditions for the { displacement}, and we permit Dirichlet, Neumann, and mixed type conditions for the pressure. The total stress of the fluid-solid mixture
 is given by $\mathbf T = \mathbf{\sigma}(u) - p \mathbf{I}$. The linearized stress tensor field $\mathbf{\sigma}(u)$ is given by
$\mathbf \sigma(u) = 2\mu \varepsilon(\mathbf{u}) + \lambda (\nabla \cdot \mathbf{u})\, \mathbf{I}$, where the symmetrized gradient $\varepsilon(\mathbf{u}) = (\nabla \mathbf u + \nabla \mathbf u^T)/2$ represents the linearized strain tensor field, and $\lambda$ and $\mu$ are the standard Lam{\'e} parameters. We use $\nabla \bu$ to denote the Jacobian of $\mathbf u$, i.e., $\nabla \mathbf u = (\partial_j u^i)$, with $\nabla\mathbf u^T =(\partial_iu^j)$. The balance of linear momentum for the mixture \eqref{Eq:elasticity} can be written equivalently  as 
~$\ds -\mu \Delta \mathbf{u} - (\lambda + \mu) \grad(\grad \cdot \mathbf{u}) + \grad p = \bF.
$

The so called {\em fluid content} is given here by the constitutive relation ~$\zeta = \nabla \cdot \mathbf{u}$. This is a simplification of  the general Biot formula  ~$\zeta =  c_0 p + \alpha \nabla \cdot \mathbf{u}$  where $c_0$ is the constrained specific storage coefficient and $\alpha$ is the Biot-Willis coefficient \cite{show1,biot1,biot2,biot3,auriault}; due to the fact that we have incompressible mixture components (as discussed above), we have that  $c_0=0$ and $\alpha=1$ \cite {detournay,bgsw}. The {\em discharge velocity} has the following dependence on pressure and permeability: $\mathbf{v} = - k(\nabla\cdot \bu)\nabla  p$, where the permeability $k(\cdot)$ is a nonlinear scalar function. In this consideration, we take a continuous function $k$, with { positive} lower and upper bounds
(see Assumption \ref{Assumpk}).
The body force $\mathbf F$ and source $S$ are given functions of space and time.

Taking the above into account, the formulation of our problem becomes: {\em Given data $d_0$, $\bF$, and $S$, find solution $(\bu,p)$ that satisfies:}
\begin{equation}\label{system1}
\begin{cases}
- \Delta \mathbf{u} - 2 \grad(\grad \cdot \mathbf{u}) { + \nabla p} = \bF  &~ \text{in}~ \Omega \times (0,T)\\
[\nabla \cdot \bu]_t-\nabla \cdot [k(\nabla \cdot \bu)\nabla p]=S&~ \text{in}~\Omega\times (0,T)\\
 \bu= {\mathbf 0} &\text{on} ~~~\Gamma\times(0,T)\\
k \nabla p \cdot \mathbf n   = 0 &\text{on} ~~~\Gamma_N\times(0,T)\\
 p = 0  &\text{on} ~~~\Gamma_D\times(0,T)\\
[\nabla \cdot \bu] (0)=d_0&~ \text{in}~ \Omega.
\end{cases}
\end{equation}
{\em The Lam{\'e} parameters $\lambda$ and $\mu$ have been set equal to 1, without loss of generality.}

In using a fixed point argument (Section \ref{nonlinearsec}), we will consider linearizing the above system,  taking $k=k(z)$, for a given $z \in L^2(0,T;L^2(\Omega))$. We refer to this linear system as \eqref{system1}$_{\text{lin}}$.
\begin{equation}  \begin{cases}
- \Delta \mathbf{u} - 2 \grad(\grad \cdot \mathbf{u})=-\nabla p+\bF  &~ \text{in}~ \Omega \times (0,T)\\
[\nabla \cdot \bu]_t-\nabla \cdot [k(z)\nabla p]=S&~ \text{in}~\Omega\times (0,T)\\
 \bu= {\mathbf 0} &\text{on} ~~~\Gamma\times(0,T)\\
k \nabla p \cdot \mathbf n   = 0 &\text{on} ~~~\Gamma_N\times(0,T)\\
 p = 0  &\text{on} ~~~\Gamma_D\times(0,T)\\
[\nabla \cdot \bu] (0)=d_0&~ \text{in}~ \Omega.
\end{cases} \tag*{(2.3)$_{\text{lin}}$}
\end{equation}

{ Finally, for ease of discussion, let us denote an arbitrary linear system corresponding to a given permeability $K(\mathbf x,t)$. We will take $(2.3)_{\text{gen}}$ to be identical to the \eqref{system1}$_{\text{lin}}$, but with pressure equation replaced by}
\begin{equation*} 
[\nabla \cdot \bu]_t-\nabla \cdot [K(\mathbf x,t)\nabla p]=S~ \text{in}~\Omega\times (0,T).
\end{equation*}

\subsection{Notation and Function Spaces}
The Sobolev space of order $s$ defined on a domain $D$ will be denoted by $ H^s(D)$, with $H^s_0(D)$ denoting the closure of test functions $C_0^{\infty}(D) := \mathcal D(D)$ in the $H^s(D)$ norm
(which we denote by $\|\cdot\|_{H^s(D)}$ or $\|\cdot\|_{s,D}$).  When $s =0 $ we may further abbreviate the notation
to $\| \cdot \|$. Vector valued spaces will be denoted as $\mathbf L^2(\Omega) \equiv [L^2(\Omega)]^n$ and $\mathbf H^s(\Omega) = [H^s(\Omega)]^n$. We make use of the standard notation for the trace of functions $\gamma[w]$ as the map from $H^1(D)$ to $H^{1/2}(\partial D)$. We will make use of the spaces $L^2(0,T;U)$ and $H^s(0,T;U)$, when $U$ is a Hilbert space. Associated norms (and inner products) will be denoted with the appropriate subscript, e.g., $||\cdot||_{L^2(0,T;U)}$, though we will simply denote $L^2$ inner products by $(\cdot,\cdot)$ when the context is clear. 

We introduce the following notation for a variable state space for the fluid pressure, as a function of the pressure boundary conditions: 
\begin{equation}\label{Vdef}
V=
\left \{
\begin{array}{c}
V_D=\big\{p\in H^1(\Omega):p|_{\Gamma_D}=0\big\},\quad \text{ when }~~\Gamma_D\neq\emptyset,
\\[.2cm]
 V_N = H^1(\Omega)\cap [L^2(\Omega)/\mathbb R],\quad ~~\text{ when }~~\Gamma_D=\emptyset.
\end{array}
\right . 
\end{equation}
Note that ~$\Gamma_D=\Gamma \implies V = H_0^1(\Omega)$.
The space $L^2(\Omega)/\mathbb R$ is isomorphic to the subspace of $L^2(\Omega)$ functions with zero average
$$L^2_0(\Omega)= \{ u \in L^2(\Omega)~:~\int_{\Omega} u ~d\mathbf x = 0 \}.$$
The gradient seminorm is a norm on $V$ in all cases, first, by the Poincar\'e inequality when $\Gamma_D \neq \emptyset$, and then by the Poincar\'e-Wirtinger inequality when $\Gamma_D =\emptyset$ \cite{kesavan,brezis}. Thus we topologize $V$ in all cases by ~~$\ds \|p\|^2_V:=\int_{\Omega}|\nabla p|^2$.

Then the primary spaces in our analysis are thus denoted by
\begin{align}
V  & \hskip1.5cm
\bV \equiv \mathbf H_0^1(\Omega) ,\hskip1.5cm \mathbb V \equiv V \times \mathbf V,
\end{align} for the pressure $p$ , displacement $\bu$, and state $(p,\bu)$, respectively. 

We define the (standard) { linear operator $\mathcal{E} \in \mathscr{L}(\mathbf V, \mathbf V')$
and} bilinear form associated to elasticity as
\begin{align}\label{bilform} { \mathcal{E} \bu(\bv) =}~ e(\bu,\bv) = \int_{\Omega} \mathbf \sigma(\bu) .. \epsilon(\mathbf{v}) \ d\Omega = &\int_{\Omega}[ Tr(\epsilon(\mathbf{u})) Tr(\epsilon(\mathbf{v})) + 2 \epsilon(\mathbf{u}) .. \epsilon(\mathbf{v})]\ d\Omega\\\nonumber
=&~(\nabla \cdot \bu, \nabla \cdot \bv) + (\nabla \bu,\nabla \bv)+(\nabla \bu,\nabla \bv^T).\end{align}
Above $A..B$ stands for the Frobenius scalar product for tensors, i.e., $A..B = A_{ij}B_{ij}$ taken with the Einstein  convention.

\subsection{Formal Statement of Results and Relationship to the Literature}\label{mainres}
In the literature there are different definitions of {\em weak solution} for Biot type systems \cite{owc,cao,auriault,bgsw,show1,zenisek}. We provide a straightforward definition with clear utility in the analysis to follow.
\begin{definition}\label{zeroAsol}[Weak Solution]
A solution to { \eqref{system1}} is { a pair} of functions 
$$(p,\bu) \in L^2(0,T;\mathbb V)$$ 
{ for which 
$\zeta_t \in L^2(0,T;V')$}, such that: \\
(a) the following variational form is satisfied in { $L^2(0,T)$}
for any $(q,\bv) \in \mathbb V$:
\begin{align}\label{weakform2}
e(\bu,\bv)+\big(\nabla p, \bv\big)+\big(k(\zeta)\nabla p, \nabla q\big) 
+\dfrac{d}{dt}(\zeta,  q)
 = \langle \bF,\bv\rangle_{\mathbf{V}'\times\mathbf{V}} 
  + \langle S,q\rangle_{V'\times V},
\end{align}
(b) the initial condition  $\zeta(0)=d_0$ is satisfied in the sense of $C([0,T];V')$, i.e., $$\ds \lim_{t \searrow 0}\zeta(t) = d_0 \in V'.$$
\end{definition}
{ {\begin{remark} The definition of a weak solution to \eqref{system1}$_{\text{lin}}$ and \eqref{system1}$_{\text{gen}}$ are obtained mutatis mutandis by replacing $k(\zeta)$ with $k(z(\mathbf x,t))$ and $K(\mathbf x,t)$.
\end{remark}}}

{  To be} consistent with other works that consider nonlinear (or { time-dependent}) permeability \cite{cao,bgsw,BW21,mikelic}, we assume continuity and $L^{\infty}$ type bounds on the permeability, as well as continuity to permit $k(\cdot)$ to considered as a Nemytskii operator.
\begin{assumption}\label{Assumpk}[Assumptions on Permeability]
The permeability function $k: \reals \to \reals$ is continuous and there exist constants $k_1 > 0$ and $k_2 >0$ such that 
$$\ds 0 < k_1 \leq k(x) \leq k_2, \ \ \forall x \in \reals.$$
\end{assumption}

In the discussion that follows, we recall the distinction made in the Introduction between the case of compressible Biot constituents ($c_0>0$) and the incompressible constituents case ($c_0=0$).
From a formal point of view, taking $c_0=0$ destroys the {  formal parabolic appearance} of the equation, removing a conserved quantity that provides  temporal regularity.

At this point, we note that several existence results are available for  \eqref{system1} and \eqref{system1}$_{\text{gen}}$. Let us point out that, in the linear, time-dependent  case for \eqref{system1}$_{\text{gen}}$ with { $A(t) = -\nabla \cdot[K(\mathbf x,t)\nabla~]$}, existence of weak solutions was obtained in \cite{indiana} (later exposited in \cite[p.116]{showmono}). The conditions for existence in these references are quite general and permit $c_0\ge 0$. Moreover, uniqueness results are available with the additional hypothesis that  $K_t \in L^1(0,T;L^{\infty}(\Omega))$. (See also the more recent \cite{mikelicplate,ellie} for a poroelastic plate model and construction of weak solutions.) The works \cite{auriault, show1} provide an abstract framework for the case of constant permeability $k=const.$, but, in spirit, are close to the linear analysis we present here. The reference \cite{auriault} considers only the compressible case $c_0 > 0$ with homogeneous boundary data and no forcing terms; the later \cite{show1} utilizes implicit semigroup theory and accommodates $c_0\ge 0$ as well as more general boundary conditions.  Again, for constant permeability, \cite{owc} makes additional regularity hypotheses on the data and { constructs} solutions (partially smoother than in Definition \ref{zeroAsol}) in a Galerkin framework. 

The more recent works \cite{cao,bgsw,BW21,sunnyplate} provide existence results for weak solutions to (a version of) the nonlinear problem \eqref{system1}. First, \cite{cao} works explicitly with $c_0>0$ and fully homogeneous Dirichlet boundary conditions; \cite{bgsw} considers mixed boundary conditions in all variables (a Lipschitz domain) and boundary sources, obtaining weak solutions for $c_0=0$, as well as accommodating the case of viscoelasticity in the porous matrix. Further work incorporating and analyzing viscoelasticity in Biot can be found in \cite{lorena1,lorena2,mikelic,show04}. Both nonlinear works \cite{cao,bgsw} utilize Rothe's method for the construction of weak solutions. The only available uniqueness results (before the treatment at hand) for the {\em linear poroelastic problem} \eqref{system1}$_{\text{gen}}$ necessitate additional regularity for the permeability, precluding their ability to be used in constructing weak solutions for the nonlinear problem.  Thus, without resolving the issue of uniqueness of weak solutions for the linear problem, one is forced to work in the context of multiple solutions. More recently, \cite{BW21} considers the fully homogeneous Dirichlet boundary conditions in all variables and provides existence of weak solutions for $c_0> 0$ using a multi-valued fixed point approach, and for $c_0=0$ via a limiting procedure. In \cite{BW21}, regularity criteria is given for uniqueness of smooth solutions, though such (strong) solutions are not constructed there, nor is a regularity theory developed.
{\em We note that in all cases for poroelastic dynamics, uniqueness of weak solutions was left open for \eqref{system1}$_{\text{gen}}$ without making the strong assumption of time differentiability of the permeability $K$.  Moreover, there is no unified treatment of the nonlinear poroelastic problem \eqref{system1} in the literature, based on clear a priori energy estimates.\footnote{In the case of nonlinear poro-visco-elasticity, viable energy estimates on {constructed} weak solutions are obtained in \cite{bgsw}, from which uniqueness can be deduced. See also \cite{lorena1,lorena2}.}}

This brings us to the principal results for systems \eqref{system1} and \eqref{system1}$_{\text{lin}}$ in the treatment at hand. The first results are for \eqref{system1}$_{\text{lin}}$, where a given~ $z \in L^2(0,T;L^2(\Omega))$ yields a  given permeability $k(z(\mathbf x,t))$. Several of the aforementioned existence results (e.g. \cite{showmono,BW21}) construct weak solutions with the properties below, including satisfying an energy inequality. Our first result states that {any weak solution, with $\bu$ continuous in time into $\bV$, satisfies an energy inequality}. This will permit us to obtain, in the standard way, the first uniqueness result for \eqref{system1}$_{\text{lin}}$ that does not place additional smoothness assumptions on the permeability. Namely, the energy estimate holds in an entire class of weak solutions, rather than for a particular solution constructed as a subsequential limit point. Additionally, this uniqueness will permit a well-defined fixed point mapping for the construction of weak solutions to the nonlinear system \eqref{system1}.

We note that the proofs of the linear results for \eqref{system1}$_{\text{lin}}$ below { are directly adapted to the situation of \eqref{system1}$_{\text{gen}}$ when $K(\mathbf x,t)$ in $L^{\infty}((0,T) \times \Omega)$.} We choose the $z(\mathbf x,t) \mapsto k(z(\mathbf x,t))$ framework for our proofs because it is a direct step in obtaining a fixed point for the physically-motivated nonlinear problem. See Corollary \ref{Cor1} and Corollary \ref{Cor2}.

\begin{theorem}\label{th:main1} 
Suppose that the permeability $k(\cdot)$ satisfies Assumption \ref{Assumpk}. Let $\mathbf u_0 \in \bV$ with $d_0=\nabla \cdot \bu_0 \in L^2(\Omega)$, $z \in L^2(0,T;L^2(\Omega))$, $\bF \in H^1(0,T;\bV')$, and $S \in L^2(0,T;V')$. Then any weak solution to \eqref{system1}$_{\text{lin}}$ with additional regularity such that $\bu\in C([0,T];\bV)$ satisfies the estimate:
{ \footnotesize
\begin{align}\label{placeholder1}
\|\bu(T)\|^2_\bV
+2\int_0^T\int_{\Omega}k|\nabla p|^2
\leq 2\Big (\|\bF (0)\|_{\bV'}^2
+2\|\bF (T)\|_{\bV'}^2 & +2\|\bu_0\|_{\bV}^2 +\frac{1}{k_1}\int_0^T\|S\|_{V'}^2
+\int_0^T\|\partial_t\mathbf{F}\|^2_{\bV'}\Big )e^{2T}.
\end{align}}
In particular,  \eqref{system1}$_{\text{lin}}$ has a unique weak solution satisfying the assumptions above.
\end{theorem}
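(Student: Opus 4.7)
The plan is to prove the energy identity
\[
\tfrac{1}{2}\tfrac{d}{dt}e(\bu,\bu) + (k(z)\nabla p,\nabla p) = \langle \bF,\bu_t\rangle + \langle S,p\rangle,
\]
integrate in $t$, use $\bF\in H^1(0,T;\bV')$ to rewrite $\int_0^T\langle \bF,\bu_t\rangle\,dt$ via integration by parts in time, and then apply Young's inequality, Korn's inequality, and Gr\"onwall to produce \eqref{placeholder1}. Formally, the identity arises from testing the elasticity equation with $\bv=\bu_t$ and the pressure equation with $q=p$; the cross-terms $\pm(\nabla p,\bu_t)$ (one from the elasticity multiplier, the other from integrating $\langle \nabla\cdot\bu_t, p\rangle$ against $p$ with zero Dirichlet trace on $\bu$) then cancel. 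Uniqueness follows immediately by linearity: the difference of two weak solutions (with the assumed regularity) solves the homogeneous problem, and the estimate \eqref{placeholder1} applied to the difference on every subinterval $[0,t]$ forces both $\bu\equiv 0$ and $\nabla p\equiv 0$; the choice of pressure space $V$ then gives $p\equiv 0$.

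The obstruction is that $\bu_t$ has no a priori temporal regularity and $p(t)$ is not defined pointwise in $t$, so the formal manipulation above is illegitimate as stated. Two observations rescue it. First, because $\zeta_t\in L^2(0,T;V')$ and $p\in L^2(0,T;V)$, the pressure equation can be paired directly with $q=p$ to give
\[
\int_0^T \langle \zeta_t,p\rangle_{V'\times V}\,dt + \int_0^T (k(z)\nabla p,\nabla p)\,dt = \int_0^T \langle S,p\rangle\,dt. \qquad (\star)
\]
Second, since the elasticity equation is elliptic and holds pointwise in $t$, testing it with $\bv=\bu(t)$ yields the endpoint pairing identity
\[
(p(t),\zeta(t)) = e(\bu(t),\bu(t)) - \langle \bF(t),\bu(t)\rangle,
\]
which assigns a continuous meaning on $[0,T]$ to the otherwise ill-defined $L^2$-pairing $(p,\zeta)$ purely through $\bu$ and $\bF$, both of which do possess pointwise values under the hypotheses $\bu\in C([0,T];\bV)$ and $\bF\in H^1(0,T;\bV')\hookrightarrow C([0,T];\bV')$. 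This is the embedded Stokes/divergence structure advertised in the introduction, effectively handling the ``modding out of $\mathrm{Ker}(B)$'' issue without having to work explicitly in a quotient space.

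To compute $\int_0^T \langle \zeta_t,p\rangle\,dt$ rigorously, I mollify in time: after suitably extending $p, \bu, \bF$ beyond $[0,T]$, form $p^\epsilon, \bu^\epsilon, \bF^\epsilon$ by convolution with a standard symmetric mollifier. By linearity of the elliptic elasticity equation, the mollified triple still satisfies the elasticity relation and is smooth in $t$, so I can differentiate in $t$ and test with $\bv=\bu^\epsilon(t)$ to obtain
\[
\tfrac{1}{2}\tfrac{d}{dt}e(\bu^\epsilon,\bu^\epsilon) - (p_t^\epsilon,\zeta^\epsilon) = \langle \bF_t^\epsilon,\bu^\epsilon\rangle.
\]
Integrating over $[0,T]$ and integrating $(p_t^\epsilon,\zeta^\epsilon)$ by parts in $t$ trades $p_t^\epsilon$ for $\zeta_t^\epsilon$ at the cost of boundary terms $\pm(p^\epsilon,\zeta^\epsilon)|_{t=0,T}$. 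The endpoint identity, applied at the mollified level, converts these boundary terms into $e(\bu^\epsilon,\bu^\epsilon)|_{t=0,T} - \langle \bF^\epsilon,\bu^\epsilon\rangle|_{t=0,T}$, which converge unambiguously as $\epsilon\to 0$. Passing to the limit (using $p^\epsilon\to p$ in $L^2(0,T;V)$, $\bu^\epsilon\to\bu$ in $C([0,T];\bV)$, and $\bF^\epsilon\to \bF$ in $H^1(0,T;\bV')$) and combining with $(\star)$ yields the rigorous energy identity and hence \eqref{placeholder1}.

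The hard part is precisely this endpoint passage to the limit: $p^\epsilon(0)$ and $p^\epsilon(T)$ depend on the arbitrary extension of $p$ outside $[0,T]$, and in general have no meaningful $\epsilon\to 0$ limit by themselves. What makes it work is that the endpoint identity replaces these pairings by expressions in $\bu^\epsilon$ and $\bF^\epsilon$ alone, whose boundary values converge by the additional regularity $\bu\in C([0,T];\bV)$ and the time regularity of $\bF$. Without either of these inputs—continuity of $\bu$ into $\bV$, or $H^1$ time regularity of $\bF$—the mollification boundary terms cannot be closed, which explains why both are imposed in the statement.
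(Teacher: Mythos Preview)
Your approach is correct and shares the overall architecture of the paper's proof: extend $\bu,\bF,p$ by their endpoint values, mollify in time, and pass to the limit using $\bu\in C([0,T];\bV)$ and $\bF\in H^1(0,T;\bV')\hookrightarrow C([0,T];\bV')$. The difference lies in how the cross-term cancellation is organized. The paper tests the mollified elasticity equation with $\partial_t\bu_h$ and the pressure equation with the \emph{mollified} $p_h$, then proves a dedicated ``mollifier swap'' lemma,
\[
\int_0^T\langle\nabla\cdot\partial_t\bu_h,p\rangle_{V'\times V}
=\int_0^T \langle[\nabla\cdot\bu]_t,p_h\rangle_{V'\times V}+O(h),
\]
whose $O(h)$ remainder is controlled by a change-of-order-of-integration argument using $\bu\in C([0,T];\bV)$ and $p(0),p(T)\in L^2(\Omega)$. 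You instead test the pressure equation directly with $p$ (obtaining $(\star)$ without any mollification), test the \emph{time-differentiated} mollified elasticity equation with $\bu^\epsilon$, and then invoke your ``endpoint identity'' $(p^\epsilon,\zeta^\epsilon)=e(\bu^\epsilon,\bu^\epsilon)-\langle\bF^\epsilon,\bu^\epsilon\rangle$ to convert the IBP boundary terms into quantities that visibly converge. This is an elegant alternative to the paper's swap lemma and makes the role of the elasticity structure more transparent.

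One step you pass over deserves an explicit line: to ``combine with $(\star)$'' you must identify the limit of $\int_0^T(p^\epsilon,\zeta_t^\epsilon)\,dt$ with $\int_0^T\langle\zeta_t,p\rangle_{V'\times V}\,dt$. This is not covered by the three convergences you list. It does follow, however, once you observe that the constant extension of $\zeta=\nabla\cdot\bu$ is continuous into $L^2(\Omega)$ (since $\bu\in C([0,T];\bV)$), so its distributional time derivative carries no Dirac masses at $t=0,T$ and coincides with the zero extension of $\zeta_t\in L^2(0,T;V')$; standard mollifier theory then gives $\zeta_t^\epsilon\to\zeta_t$ in $L^2(0,T;V')$, which, together with $p^\epsilon\to p$ in $L^2(0,T;V)$, yields the required identification. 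This is precisely the content the paper isolates in its swap lemma, so the two proofs ultimately hinge on the same fact, just packaged differently.
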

\begin{remark} We note that, owing to the built in hypothesis that $\bu \in C([0,T];\bV)$, we will immediately have that, given a weak solution as above, $\ds \lim_{t \searrow 0} \bu = \bu_0$. \end{remark}

We first point to the assumption on the data that $d_0 \in L^2(\Omega)$ specifically emanates from a $\bu_0 \in \mathbf V$ such that $\nabla \cdot \bu_0 = d_0$. This assumption is the same as the one taken in \cite{zenisek,showmono,indiana,bgsw,BW21}, and is typically a byproduct of the construction of the solution. We note that this condition seems somewhat peculiar, as the only term appearing under the time derivative in the dynamics \eqref{system1}$_{\text{lin}}$ is $\nabla \cdot \bu$, and thus the natural data would be $[\nabla \cdot \bu](0)=d_0$. 
\begin{remark} In the above estimate, taking $d_0 =0$ (as well as $S = 0$ and $\mathbf F\equiv \mathbf 0$) does not necessarily ensure that $\bu$ or $p$ are identically zero. \end{remark} 

We address these issues, and resolve them, through the next result. Working abstractly on the {\em reduced form} of \eqref{system1}$_{\text{lin}}$ (given later in \eqref{weakpforgiventildep}), we can improve Theorem \ref{th:main2} and remove the  excessive { requirement} that $\bu_0 \in \mathbf V$.
\begin{theorem}\label{th:main2} Suppose that the permeability $k(\cdot)$ satisfies Assumption \ref{Assumpk}. Let $d_0\in L_0^2(\Omega)$, $z \in L^2(0,T;L^2(\Omega))$, $\bF \in H^1(0,T;\bV')$, and $S \in L^2(0,T;V')$. Then: 
\\[.2cm]
(i) There exists a weak solution to \eqref{system1}$_{\text{lin}}$ satisying the following estimate:
{\footnotesize\begin{align}\label{Thm2Estimate} 
\|\bu\|^2_{L^{\infty}(0,T;{\bf V})} 
+\|p\|^2_{L^2(0,T;V)} 
+\|[\nabla \cdot \bu]_t\|^2_{L^2(0,T;V')} 
  \lesssim ||d_0||_{L^2(\Omega)}^2+\|S\|^2_{L^2(0,T;V')} +||\bF||_{H^1(0,T;\mathbf V')}^2.
\end{align}}
\noindent (ii) Moreover, {\em any} weak solution to \eqref{system1}$_{\text{lin}}$ in the sense of Definition \ref{zeroAsol} has the property that $\bu \in C([0,T];\bV)$. 
\end{theorem}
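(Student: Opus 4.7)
The strategy is to cast \eqref{system1}$_{\text{lin}}$ into a reduced implicit-parabolic equation for $p$ alone, and then recover $\bu$ from $p$ via a stationary Stokes-type problem that enjoys continuous dependence on its data. At every fixed $t$, the elasticity equation together with the constraint $\nabla \cdot \bu = \zeta$ is a generalized Stokes problem for which the Ne\v{c}as--Bogovski{\u\i} inf-sup theory yields a bounded linear solution map $(\zeta, \bF) \mapsto (\bu, p)$ from $L^2_0(\Omega) \times \bV'$ into $\bV \times V$ (with the appropriate quotient in the Neumann pressure case). Using this map, the coupled system collapses to the implicit parabolic problem
\begin{equation*}
[Bp]_t + A(t) p = S - f_t \ \text{ in } V',\qquad Bp(0)+f(0) = d_0,
\end{equation*}
where $Bp := -\nabla\cdot \mathcal E^{-1}\nabla p$ is the symmetric, nonnegative pressure-to-dilation operator, $f := \nabla \cdot \mathcal E^{-1}\bF \in H^1(0,T;L^2(\Omega))$, and $A(t) p := -\nabla\cdot[k(z)\nabla p]$ is uniformly $V$-coercive by Assumption~\ref{Assumpk}. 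A direct computation gives the crucial identity $(Bp,p) = e(\bu - \mathcal E^{-1}\bF,\, \bu - \mathcal E^{-1}\bF)$, which turns any bound on $(Bp,p)$ into a bound on $\|\bu\|_\bV$.

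For part (i), I invoke the abstract existence theory of \cite[Ch.~III.3]{showmono} on the reduced problem, yielding $p\in L^2(0,T;V)$ with $[Bp]_t\in L^2(0,T;V')$, and then recover $\bu$ through the Stokes map; the initial pressure $p(0)$ is produced from $d_0 - f(0)\in L^2_0(\Omega)$ by inverting $B$ on its range. The estimate \eqref{Thm2Estimate} follows by testing the reduced equation against $p$ itself: the $[Bp]_t$ term yields $\tfrac12\tfrac{d}{dt}(Bp,p)$ (admissible on the constructed solution), the $A$-term yields $\int k|\nabla p|^2 \geq k_1\|p\|_V^2$, and the source term is absorbed via Young's inequality. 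The contribution of $-f_t$ is handled by integrating by parts in time to transfer the derivative onto $\bF$, generating the $\|\bF(0)\|_{\bV'}^2$, $\|\bF(T)\|_{\bV'}^2$, and $\int_0^T\|\partial_t \bF\|_{\bV'}^2$ terms. The $\|d_0\|_{L^2}^2$ contribution enters through $(Bp(0),p(0))$, and Gronwall closes the estimate; the $L^2(0,T;V')$ bound on $\zeta_t$ is then read off the pressure equation once $\|p\|_{L^2(0,T;V)}$ and $\|S\|_{L^2(0,T;V')}$ are controlled.

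For part (ii), I am given only the regularity of Definition~\ref{zeroAsol}. Because the Stokes map is continuous in its data and $\bF\in H^1(0,T;\bV')\hookrightarrow C([0,T];\bV')$, the desired conclusion $\bu\in C([0,T];\bV)$ is equivalent to $\zeta\in C([0,T];L^2(\Omega))$. The standard Lions--Magenes embedding applied to $\zeta\in L^2(0,T;L^2)$ with $\zeta_t\in L^2(0,T;V')$ yields only $\zeta\in C([0,T];[L^2,V']_{1/2})$, which is strictly weaker, so sharper continuity must be extracted from the equation itself. The decisive step, and what I expect to be the main obstacle, is establishing the energy identity
\begin{equation*}
\tfrac12(Bp(t),p(t))+\int_0^t(k\nabla p,\nabla p)\,ds = \tfrac12(Bp(0),p(0)) + \int_0^t\langle S - f_t,\, p\rangle\,ds
\end{equation*}
for an \emph{arbitrary} weak solution, since the natural test function $p$ is not directly admissible in the distributional reduced equation. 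My plan is to mod out $\mathrm{Ker}(B)$ in the variational structure, test the equation against the Friedrichs time-mollification $\rho_\varepsilon\ast p$ (with suitable extension past the endpoints), exploit the symmetry of $B$ and its commutation with time mollification to shift a mollifier across it, and pass $\varepsilon\to 0$ using standard commutator bounds (justified since $[Bp]_t\in L^2(0,T;V')$ and $p\in L^2(0,T;V)$, and since $A(t)$ acts only in space). The identity then forces $t\mapsto (Bp(t),p(t))$ to be continuous, hence $p\in C([0,T];H_B)$ and $\zeta = Bp+f\in C([0,T];L^2(\Omega))$; the Stokes map delivers $\bu\in C([0,T];\bV)$. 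Uniqueness of the weak solution is then immediate from Theorem~\ref{th:main1}.
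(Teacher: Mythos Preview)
Your outline for part~(i) is essentially the paper's argument: reduce to $[Bp]_t+A(t)p=\widetilde S$, invoke the abstract existence result of \cite[Ch.~III.3]{showmono}, and formally test with $p$; your Stokes-map viewpoint is equivalent to the paper's Proposition~\ref{IsoB} that $B$ is an isomorphism on $L^2_0(\Omega)$.

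For part~(ii) you overlook the decisive simplification and instead propose a route that, as written, is circular. You assert that Lions--Magenes applied to $\zeta\in L^2(0,T;L^2)$, $\zeta_t\in L^2(0,T;V')$ is too weak, and therefore turn to mollification. But the paper observes that $Bp$ actually has \emph{better} spatial regularity than $L^2$: since $p\in L^2(0,T;V)$ gives $\nabla p\in \mathbf L^2(\Omega)$, elliptic regularity for $\mathcal E$ yields $\mathcal E^{-1}\nabla p\in \mathbf H^2(\Omega)\cap\bV$, hence $Bp=-\nabla\cdot\mathcal E^{-1}\nabla p\in H^1(\Omega)\cap L^2_0(\Omega)$ (Lemma~\ref{elipreg}). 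Thus $Bp\in L^2(0,T;H^1\cap L^2_0)$ together with $[Bp]_t\in L^2(0,T;V')$ and the standard Lions interpolation on a Gelfand triple through $L^2_0(\Omega)$ give $Bp\in C([0,T];L^2_0(\Omega))$ directly---no energy identity and no mollification are needed. Since $B$ is an isomorphism on $L^2_0(\Omega)$, one obtains $\mathcal Pp\in C([0,T];L^2_0)$ (in the mixed case $B$ is not injective on all of $L^2$, so one works with the mean-free part $\mathcal Pp$ and uses $\nabla p=\nabla\mathcal Pp$); the elasticity equation $\mathcal E\bu=-\nabla p+\bF$ then yields $\bu\in C([0,T];\bV)$ immediately via \eqref{ellipticU}.

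The mollification argument you sketch is precisely the machinery of Theorem~\ref{th:main1}, which the paper runs \emph{under the standing hypothesis} $\bu\in C([0,T];\bV)$: the extensions of $\bu,p$ beyond $[0,T]$ in \eqref{extension} and the endpoint commutator estimates of Lemma~\ref{ConvolutionLemma} explicitly rely on the pointwise values $\bu(0),\bu(T),p(0),p(T)$ making sense. Using that same argument to \emph{establish} $\bu\in C([0,T];\bV)$ is therefore circular unless you first secure endpoint regularity by other means---which is exactly what the elliptic-regularity step above provides for free.
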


The above theorem can be used to resolve the issue of uniqueness of arbitrary weak solutions in either case of $V=V_D$ or $V=V_N$. Indeed, we show that any weak solution, for $d_0 \in L_0^2(\Omega)$, will (a posteriori) have the property that $\bu \in C([0,T];\mathbf V)$. Thus, extracting $\bu(0)$, we can apply Theorem \ref{th:main1} to obtain uniqueness of the particular solution that satisfies \eqref{Thm2Estimate}.
\begin{corollary}\label{Cor1}
Assume that the permeability $k(\cdot)$ satisfies Assumption \ref{Assumpk}. Let $d_0\in L_0^2(\Omega)$, $z \in L^2(0,T;L^2(\Omega))$, $\bF \in H^1(0,T;\bV')$, and $S \in L^2(0,T;V')$. Then there exists a unique weak solution to \eqref{system1}$_{\text{lin}}$ that satisfies \eqref{Thm2Estimate}.
\end{corollary}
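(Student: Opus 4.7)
The plan is to deduce the corollary by chaining together the two preceding theorems: Theorem \ref{th:main2}(i) delivers existence together with the estimate \eqref{Thm2Estimate}, while Theorem \ref{th:main2}(ii) automatically upgrades every weak solution in the sense of Definition \ref{zeroAsol} to the regularity class $\bu \in C([0,T];\bV)$, which is precisely the class in which Theorem \ref{th:main1} provides uniqueness. No new a priori analysis is needed beyond what is already in place.

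Existence is immediate from Theorem \ref{th:main2}(i): it produces a weak solution $(\bu^\ast,p^\ast)$ satisfying \eqref{Thm2Estimate}.

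For uniqueness, I would take two weak solutions $(\bu_1,p_1)$ and $(\bu_2,p_2)$ corresponding to the same data $(d_0,\bF,S,z)$ and apply Theorem \ref{th:main2}(ii) to both, obtaining $\bu_i \in C([0,T];\bV)$. The traces $\bu_i(0)$ thus exist in $\bV$, and the initial condition $\zeta_i(0)=d_0$ from Definition \ref{zeroAsol} yields $\nabla\cdot\bu_i(0)=d_0 \in L^2(\Omega)$. Setting $\bu_0:=\bu_1(0)\in\bV$, all hypotheses of Theorem \ref{th:main1} are satisfied, so Theorem \ref{th:main1} asserts uniqueness of the weak solution in the class $\bu\in C([0,T];\bV)$ for these data. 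Since both $(\bu_1,p_1)$ and $(\bu_2,p_2)$ belong to this class and share the same $(d_0,\bF,S,z)$---with $\bu_0$ entering the weak formulation only through $d_0=\nabla\cdot\bu_0$ and otherwise serving only as a quantifier on the right-hand side of the energy estimate---they must coincide. The unique weak solution is then necessarily the one produced by Theorem \ref{th:main2}(i) and inherits \eqref{Thm2Estimate}.

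The single delicate point I anticipate is the observation that the uniqueness class in Theorem \ref{th:main1} is parameterized by the initial datum $d_0$ alone, and not by the specific lift $\bu_0 \in \bV$ used to state the hypothesis: any two lifts of a given $d_0 \in L_0^2(\Omega)$ (which always exist via the Bogovskii right-inverse of the divergence on a Lipschitz domain) yield the same variational problem and the same initial condition on $\zeta$. Once this is acknowledged, the argument is essentially mechanical, with all substantive analytical work already accomplished in Theorems \ref{th:main1} and \ref{th:main2}.
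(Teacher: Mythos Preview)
Your approach is the paper's own: Theorem \ref{th:main2}(i) supplies existence together with \eqref{Thm2Estimate}, Theorem \ref{th:main2}(ii) upgrades every weak solution to $\bu\in C([0,T];\bV)$, and one then invokes Theorem \ref{th:main1} for uniqueness.

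Your resolution of the ``delicate point,'' however, is not quite complete. You argue that the weak formulation depends only on $d_0$, so the choice of lift $\bu_0$ is irrelevant---but the Remark closing Section 3 explicitly warns that, at the level of Theorem \ref{th:main1}'s proof, $\nabla\cdot\bu_0\equiv 0$ for the difference is \emph{not} sufficient: the energy estimate \eqref{placeholder1} applied to $\bu_1-\bu_2$ leaves $\|\bu_1(0)-\bu_2(0)\|_{\bV}^2$ on the right-hand side, and this must be shown to vanish. The correct resolution is not that ``any lift works'' but that the elasticity equation at $t=0$ forces $\bu_1(0)=\bu_2(0)$. Indeed, both $\bu_i(0)$ satisfy $\cE\bu_i(0)=-\nabla p_i(0)+\bF(0)$ with $\nabla\cdot\bu_i(0)=d_0$, so the difference $\bw=\bu_1(0)-\bu_2(0)$ obeys $\cE\bw=-\nabla q$ with $Bq=\nabla\cdot\bw=0$; by Lemma \ref{Binvert} the function $q=p_1(0)-p_2(0)$ is constant, hence $\nabla q=0$ and $\bw=0$. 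This step is implicit in the proof of Theorem \ref{th:main2}(ii), where $\bu(t)$ is recovered from $\nabla p(t)=\nabla\cP p(t)$ through the elasticity equation. Once $\bu_1(0)=\bu_2(0)$ is secured, Theorem \ref{th:main1} applies with $\bu_0$ equal to this common value and the argument closes.
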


With the results for the general linear problem established, {  we can simplify our proofs in \cite{bgsw,BW21} and obtain the first direct fixed point construction for the existence of solutions to the quasilinear problem 
\eqref{system1}.}

\begin{theorem}\label{th:main3}
Let all assumptions of Theorem \ref{th:main2} hold. Assume additionally that $\bF\in L^2(0,T;L^2(\Omega))$. Then there exists a weak solution to the nonlinear problem \eqref{system1} that satisfies estimate \eqref{Thm2Estimate}. In addition, we have that
~~$\ds
\|\bu\|_{L^2(0,T;\mathbf H^{2}(\Omega))}\leq C(\text{data}).
$
\end{theorem}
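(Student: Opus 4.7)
The plan is a Schauder-type fixed-point argument on the scalar auxiliary variable $z\in L^2(0,T;L^2(\Omega))$ that enters the permeability. Define
\[
\mathcal{T}\colon L^2(0,T;L^2(\Omega))\to L^2(0,T;L^2(\Omega)),\qquad \mathcal{T}(z) := \nabla\cdot\bu_z,
\]
where $(\bu_z,p_z)$ is the unique weak solution to \eqref{system1}$_{\text{lin}}$ with permeability $k(z)$, guaranteed by Theorem~\ref{th:main2} together with its corollary (note that $k(z)\in L^\infty((0,T)\times\Omega)$ with bounds $k_1,k_2$ by Assumption~\ref{Assumpk} and continuity of $k$, so the linear theory applies). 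A fixed point $z^{*}=\nabla\cdot\bu_{z^{*}}$ is, by construction, a weak solution of \eqref{system1} satisfying \eqref{Thm2Estimate}. Since the constants in \eqref{Thm2Estimate} depend on $k(\cdot)$ only through $k_1,k_2$, the map $\mathcal{T}$ sends all of $L^2(0,T;L^2(\Omega))$ into a fixed closed ball $B_R\subset L^2(0,T;L^2(\Omega))$ whose radius depends only on the data $d_0,S,\bF$.

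For relative compactness of $\mathcal{T}(B_R)$ one must upgrade the spatial regularity of $\bu_z$. Since $p_z\in L^2(0,T;V)$ by Theorem~\ref{th:main2}, and using the added hypothesis $\bF\in L^2(0,T;\mathbf L^2(\Omega))$, the elasticity equation $-\Delta\bu_z-2\nabla(\nabla\cdot\bu_z)=-\nabla p_z+\bF$ has right-hand side in $L^2(0,T;\mathbf L^2(\Omega))$. Elliptic regularity for the Lam\'e system with homogeneous Dirichlet data then yields $\bu_z\in L^2(0,T;\mathbf H^2(\Omega))$ with a bound depending only on the data, simultaneously establishing the last claim of the theorem. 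In particular $\nabla\cdot\bu_z\in L^2(0,T;H^1(\Omega))$; coupled with $[\nabla\cdot\bu_z]_t\in L^2(0,T;V')$ from \eqref{Thm2Estimate}, the Aubin--Lions compactness lemma (exploiting the compact embedding $H^1(\Omega)\hookrightarrow L^2(\Omega)$ and the continuous embedding $L^2(\Omega)\hookrightarrow V'$) makes $\mathcal{T}(B_R)$ relatively compact in $L^2(0,T;L^2(\Omega))$.

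For continuity of $\mathcal{T}$, let $z_n\to z$ in $L^2(0,T;L^2(\Omega))$. Up to a subsequence $z_n\to z$ a.e., so by continuity and boundedness of $k$ together with dominated convergence, $k(z_n)\to k(z)$ in $L^q((0,T)\times\Omega)$ for every $q<\infty$. The uniform bounds \eqref{Thm2Estimate} extract weak and weak-$*$ limits $(\bu^{*},p^{*})$ of $(\bu_{z_n},p_{z_n})$; the strong convergence of $k(z_n)$ paired with the weak convergence $\nabla p_{z_n}\rightharpoonup\nabla p^{*}$ in $L^2(0,T;\mathbf L^2(\Omega))$ allows passage to the limit in \eqref{weakform2}, so $(\bu^{*},p^{*})$ solves \eqref{system1}$_{\text{lin}}$ with permeability $k(z)$. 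The uniqueness statement in Theorem~\ref{th:main2} identifies the limit as $\mathcal{T}(z)$, and a standard subsequence-of-subsequences argument promotes convergence to the entire sequence. Schauder's theorem applied to $\mathcal{T}\colon B_R\to B_R$, which is now continuous with relatively compact image, supplies the desired fixed point $z^{*}$, and the attainment of the initial condition $\nabla\cdot\bu_{z^{*}}(0)=d_0$ is inherited directly from the linear theory. The main obstacle is precisely the compactness step and the attendant $\mathbf H^2$-regularity upgrade (justifying the extra $L^2$-hypothesis on $\bF$); on less regular domains one would settle for an $\mathbf H^{1+\epsilon}$ gain, which still suffices for Aubin--Lions to close the loop.
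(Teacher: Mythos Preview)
Your proposal is correct and follows essentially the same route as the paper: a Schauder fixed-point on the map $z\mapsto\nabla\cdot\bu_z$ in $L^2(0,T;L^2(\Omega))$, with compactness obtained via Aubin--Lions using the $H^1$-regularity of $\nabla\cdot\bu_z$ (from elliptic regularity for the Lam\'e system, which is where the extra hypothesis $\bF\in L^2(0,T;\mathbf L^2(\Omega))$ enters) together with the $L^2(0,T;V')$ bound on $[\nabla\cdot\bu_z]_t$, and continuity via the Nemytskii-type convergence of $k(z_n)$ paired with weak convergence of $\nabla p_{z_n}$. The only stylistic difference is that the paper restricts the test functions to $L^\infty(0,T;W^{1,\infty}(\Omega))$ to pass the limit in the nonlinear term and then invokes density, whereas you argue directly via dominated convergence that $[k(z_n)-k(z)]\nabla q\to 0$ strongly in $L^2$ for each fixed $q\in V$; both are valid and lead to the same conclusion.
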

\begin{remark} { {The above theorem depends upon elliptic regularity for elasticity in the fixed point construction (to obtain compactness of the fixed point mapping). This is why also we require more regularity on the {source of linear momentum} $\bF$ than {the two previous} results obtained for the linear problem. (See Section \ref{regremarks} for more discussion.)}}\end{remark}
We mention that the regularity criterion (in fact, a weak-strong uniqueness result) presented in \cite{BW21,sunnyplate} remains valid here. A future work will explicitly use these results to construct strong solutions to the nonlinear problem \eqref{system1}  satisfying the requisite regularity { to be unique}.

{  Lastly, we present the linear result available in the general setting for a given permeability $K(\mathbf x, t)$, corresponding to \eqref{system1}$_{\text{gen}}$.
\begin{corollary}\label{Cor2}
Assume that the permeability $K$ has the property that $$0<||K||_{L^{\infty}(\Omega \times (0,T))}<+\infty.$$ Let $d_0\in L_0^2(\Omega)$ and $\bF \in H^1(0,T;\bV')$, and $S \in L^2(0,T;V')$. Then there exists a unique weak solution to \eqref{system1}$_{\text{gen}}$ that satisfies \eqref{Thm2Estimate}.
\end{corollary}}

\subsection{Remarks on Regularity of $\Omega$}\label{regremarks}
{ {For all results presented above} we take the standing hypothesis that $\Omega$ is of class $\mathcal C^2$. However, this assumption is made for simplicity of exposition and can be relaxed without significantly changing the proofs. 

More precisely, we use smoothness of the domain only to apply elliptic regularity for the elasticity equations. Since we do not use elliptic regularity in the proof of Theorem \ref{th:main1}, this theorem is valid for arbitrary Lipschitz domains. Moreover, in the proofs of Theorems \ref{th:main2} and \ref{th:main3} elliptic regularity is only used for interpolation to prove $Bp\in C([0,T];L_0^2(\Omega))$ and for spatial compactness in Aubin-Lions lemma, respectively. Note that, in both instances, full elliptic regularity is not needed, as it is enough to prove just $\epsilon$ gain of regularity over $\mathbf H^1(\Omega)$ of the elastic displacement, i.e., $\nabla\cdot \bu\in L^2(0,T;H^{\epsilon}(\Omega))$ for some $\epsilon>0$. Such regularity results are available in a variety of situation, e.g., polyhedral domains and mixed boundary condition for the elastic displacement (see e.g. \cite{grisvard,mazzucato,nicaise}). Furthermore, some regularity of $\bF$ can be sacrificed. Therefore, our analysis covers cases previously considered in the literature (e.g. \cite{bgsw,show1}), including those motivated by applications.
}

\section{Energy Estimates for Weak Solutions: Proof of Theorem \ref{th:main1}}
We forgo the explicit construction of a weak solution for \eqref{system1}$_{\text{lin}}$. Several viable and direct approaches are available, perhaps the most useful are \cite[Chapter III.3]{showmono} and \cite{BW21}. The former utilizes a generalization of Lax-Millgram on an equivalent formulation of the problem, and the latter is explicitly based on a spatial Galerkin's method. In either case, weak solutions are constructed and {\em the constructed} weak solution satisfies an energy inequality. Here, we are focusing on a general energy inequality itself. Moreover, with any a priori energy estimate holding (for approximants), a construction of weak solutions (as in Definition \ref{zeroAsol}) follows. 

Formally, the desired energy inequality in Theorem \ref{th:main1} is proved by formally taking the pair $(\partial_t\bu,p)$ as a test function in a weak form \eqref{weakform2}. While $p$ has sufficient regularity to be used as such, the quasi-static nature of the Biot dynamics does not permit $\partial_t\bu$ as a multiplier in the elasticity equation for an arbitrary weak solution. Hence, we seek a mollification mechanism by which to allow such multiplication in the framework of any given weak solution. 

In this argument, we are working with the full system as opposed to the reduced system, which we will use in the next section. We are attempting to gain $L^{\infty}(0,T;\mathbf V)$ bounds on the displacement $\bu$, and thus we assume that $\bu(0)=\bu_0 \in \mathbf V$, from which we will require that the initial condition $\nabla\cdot \bu(0)=d_0 \in L_0^2(\Omega)$ is compatible, as discussed in the previous section. We will eliminate this requirement in the sequel.

We first prove a small mollification argument, followed by the desired energy estimate through mollification; finally, we conclude the uniqueness result directly. 

\subsection{Temporal $V'\times V$ Mollification}
Let $h>0$ and $j_h\in \mathcal D(\mathbb R)$ such that supp$\{j_h\}\subset (-h,h)$, and $\int_{\R}j_h=1$.\footnote{This is the traditional mollifier, sometimes denoted by $\eta_h$ \cite{kesavan,evans}.} For a locally integrable function $f$ we denote by $f_h$ its temporal regularization (mollification):
$$
f_h(t):=\int_{\R}f(s)j_h(t-s)ds.
$$

In order to apply the regularization procedure to the linear Biot system, we need to extend all variables from $(0,T)$ to $\R$. With a slight abuse of notation, we denote the extension in the same way as the original functions. The extensions are given in the following way:

{\footnotesize \begin{equation}\label{extension}
\bu(t)=
\left \{
\begin{array}{lcr}
\bu_0 && t\leq 0
\\
\bu(t)&& 0<t<T
\\
\bu(T)&& t\geq T
\end{array}
\right .
,~~\;
\mathbf{F}(t)=
\left \{
\begin{array}{lcr}
\mathbf{F}(0) && t\leq 0
\\
\mathbf{F}(t)&& 0<t<T
\\
\mathbf{F}(T)&& t\geq T
\end{array}
\right .
,~~\;
p(t)=
\left \{
\begin{array}{lcr}
p(0) && t\leq 0
\\
p(t)&& 0<t<T
\\
p(T)&& t\geq T
\end{array}
\right ..
\end{equation}}

Note that by our assumption on the data $\bF$, and that weak solutions from Theorem \ref{th:main1} have that $\bu\in C([0,T];\bV)$, we conclude that the elasticity equation \eqref{system1}$_1$ is satisfied in $\bV'$ for every $t$, and thus $\nabla p\in C([0,T];\bV')$ for weak solutions corresponding to Theorem \ref{th:main1}. From this, we infer that $p \in C([0,T];L^2(\Omega))$ through the characterization of $\bV=\mathbf H^{-1}(\Omega)$. Therefore, all extensions in \eqref{extension} are well-defined. For such extensions we have:

\begin{lemma}\label{ConvolutionLemma} 
For extended functions as the ones defined in \eqref{extension}, we have the following identity:
$$
\int_0^T\langle\nabla\cdot\partial_t\bu_h,p\rangle_{V'\times V}
=\int_0^T \langle[\nabla\cdot\bu]_t,p_h\rangle_{V'\times V}+O(h).
$$
\end{lemma}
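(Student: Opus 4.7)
The plan is to prove the identity as a near-self-adjointness statement for convolution against $j_h$, with the error produced entirely by the fact that both sides integrate over $(0,T)$ rather than all of $\mathbb{R}$. I would proceed in three stages: first rewrite the left-hand side via Fubini, then identify the discrepancy with the right-hand side as two contributions concentrated in windows of length $h$ near $t=0$ and $t=T$, and finally bound those boundary contributions using an integration by parts in time that transfers the derivative off $\nabla\cdot\bu$.

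For the first stage I would observe that the extension in \eqref{extension} is continuous on $\mathbb{R}$: since $\bu\in C([0,T];\bV)$ matches the constant extensions at $t=0,T$, the distributional derivative of the extended $\bu$ has no singular endpoint contributions and coincides with $\bu_t$ on $(0,T)$, vanishing outside. Hence $\partial_t\bu_h = (\partial_t\bu)_h$, and expanding the convolution and swapping the order of integration (justified because $[\nabla\cdot\bu]_t\in L^2(0,T;V')$ while $p\in C([0,T];L^2(\Omega))$) produces
\begin{equation*}
\text{LHS} \;=\; \int_0^T\bigl\langle [\nabla\cdot\bu]_t(s),\,\Phi_h(s)\bigr\rangle_{V'\times V}\,ds,\qquad \Phi_h(s):=\int_0^T j_h(t-s)\,p(t)\,dt.
\end{equation*}

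For the second stage I would compare $\Phi_h$ against $p_h$. Since $p_h(s)=\int_{\mathbb{R}} j_h(t-s)p(t)\,dt$ uses the constant extensions $p(0)$ and $p(T)$, a direct computation yields
\begin{equation*}
p_h(s)-\Phi_h(s) \;=\; p(0)\,\alpha_h^-(s)+p(T)\,\alpha_h^+(s),
\end{equation*}
with scalar weights $\alpha_h^\mp\in[0,1]$ supported in $[0,h]$ and $[T-h,T]$, respectively. Substituting into $\text{LHS}-\text{RHS}$ thus reduces the problem to estimating two integrals over time-windows of length $h$.

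The third stage will be the main obstacle: $p(0)$ and $p(T)$ only lie in $L^2(\Omega)$, and therefore $p(0)\alpha_h^-$ is not a priori in $V$, so the pairing with $[\nabla\cdot\bu]_t\in V'$ is not directly meaningful. I would first assume the additional regularity $p(0),p(T)\in V$ and integrate by parts in $s$ on each window to transfer the time derivative onto the scalar weight. Using $(\alpha_h^-)'(s)=-j_h(-s)$ together with $\alpha_h^-(h)=0$, the boundary terms from the integration by parts combine with the interior term via the normalization $\int_{-h}^{0} j_h = \alpha_h^-(0)$, collapsing the expression on $[0,h]$ to
\begin{equation*}
-\int_0^h\bigl(\nabla\cdot[\bu(s)-\bu_0],\,p(0)\,j_h(-s)\bigr)_{L^2(\Omega)}\,ds,
\end{equation*}
which is controlled by $\|p(0)\|_{L^2(\Omega)}\,\sup_{s\in[0,h]}\|\bu(s)-\bu_0\|_{\bV}$ and hence vanishes as $h\downarrow 0$ by the continuity $\bu\in C([0,T];\bV)$. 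The window at $t=T$ is handled symmetrically. To remove the artificial hypothesis $p(0),p(T)\in V$, I would approximate these boundary values in $L^2(\Omega)$ by a sequence in $V$, apply the identity to each approximant, and pass to the limit; the estimate depends on the boundary values only through their $L^2$-norms, so the passage is routine.
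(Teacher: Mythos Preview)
Your proposal is correct and follows essentially the same route as the paper: expand the convolution, swap the order of integration via Fubini, isolate boundary contributions supported on $[0,h]$ and $[T-h,T]$, integrate by parts in $s$ under the provisional hypothesis $p(0),p(T)\in V$ to move the time derivative off $\nabla\cdot\bu$, estimate using $\bu\in C([0,T];\bV)$, and finish by density in $L^2(\Omega)$. Your organization via the weights $\alpha_h^\pm$ and the collapsing of the boundary and interior terms into the single expression $-\int_0^h(\nabla\cdot[\bu(s)-\bu_0],p(0)\,j_h(-s))\,ds$ is in fact a slightly tidier bookkeeping than the paper's, but the argument is the same.
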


\begin{proof}
Let $f = \nabla \cdot \partial_t \bu \in { L^2}(0,T; \bV')$. Then 
$$\int_0^T\langle f_h(t),p(t)\rangle_{V'\times V}dt = \int_0^T \int_{t - h}^{t+h} \langle j_h(t-s) f(s), p(t) \rangle _{V'\times V}ds dt$$
$$= \Big( \int_{-h}^{T+h} ds \int_{s-h}^{s+h} dt - \int_{-h}^{h} ds \int_{s-h}^0 dt - \int_{T-h}^{T+h} ds \int_T^{s+h} dt \Big)
 \langle f(s), j_h(s-t) p(t)\rangle_{V'\times V}   $$
\begin{equation}\label{lemmah}
= \int_{0}^{T} \langle f(s), p_h(s) \rangle_{V'\times V} ds  
- \Big( \int_{0}^{h} ds \int_{s-h}^0 dt + \int_{T-h}^{T} ds \int_T^{s+h} dt \Big)
 \langle f(s), j_h(s-t) p(t)\rangle_{V'\times V}
 \end{equation}
since $f(s) = 0$ outside of $[0,T]$. \\

Now we have the following claims: 
\begin{equation}\label{estimateI1}
I_1 =  \int_{0}^{h} ds \int_{s-h}^0 dt \langle f(s), j_h(s-t) p(t)\rangle_{V'\times V} \to 0  \ \ \text{as} \ \ h \to 0 \ \ \text{and}
\end{equation}
\begin{equation}\label{estimateI2}
I_2 = \int_{T-h}^{T} ds \int_T^{s+h} dt 
 \langle f(s), j_h(s-t) p(t)\rangle_{V'\times V} \to 0  \ \ \text{as} \ \ h \to 0.
 \end{equation}
We prove here only \eqref{estimateI1}, as \eqref{estimateI2} follows similarly. First, assume that $p(0) \in V$ and recall that $f(s) = \partial_s \nabla \cdot \bu$. Therefore we use IBP and rewrite $I_1$ as follows:
\begin{align*}
I_1
= & \int_0^h\left\langle\nabla\cdot \bu(s),p(0)j_h(-h)\right\rangle_{V'\times V}ds+\int_0^h\big\langle\nabla\cdot \bu(s),p(0)\int_{s-h}^0j'_h(t-s)dt\big\rangle_{V'\times V}ds\\
&+\big\langle\nabla\cdot \bu(h)-\nabla \cdot \bu(0),p(0)\int_{-h}^0j_h(t)dt\big\rangle_{V'\times V}\\
= & \int_0^h\big\langle\nabla\cdot \bu(s),p(0)\int_{s-h}^0j'_h(t-s)dt\big\rangle_{V'\times V}ds
+\big\langle\nabla\cdot \bu(h)-\nabla \cdot \bu(0),p(0)\int_{-h}^0j_h(t)dt\big\rangle_{V'\times V}.
\end{align*}
Note that each term in the last equality has $L^2$ spatial regularity, and thus all of the $V'\times V$ duality pairings may be replaced by $L^2(\Omega)$ inner products and then estimated as follows: 
\begin{align*}
\Big|\int_0^h\Big(\nabla\cdot \bu(s),p(0)\int_{s-h}^0j'_h(t-s)dt\Big)ds \Big| \le C||p(0)||_{L^2(\Omega)}h\sup_{[0,T]}||\bu||_{\bV} \ \ \xrightarrow[h \to 0]{} 0 \\
\Big|\big(\nabla\cdot \bu(h)-\nabla \cdot \bu(0),p(0)\int_{-h}^0j_h(t)dt\big) \Big| \le C||p(0)||_{L^2(\Omega)}||\bu(h)-\bu(0)||_V \ \  \xrightarrow[ h \to 0]{} 0 
\end{align*}
where in the last line we used the fact that $\bu \in C([0,T];\bV)$.\\

In the case where $p(0) \in L^2(\Omega)$ only, by density, take $p_n(0) \in V$ to be such that $p_n(0) \xrightarrow[n \to \infty]{}  p(0) \in L^2(\Omega)$, and denote $p_n(t)$ as the extension analogous to \eqref{extension}. Perform the computations listed above with 
 $p_n(0) \in V$, and then pass with the limit in $n$ in the final step.This finishes the proof of the claims.\\ 

Lastly, combining \eqref{lemmah} with \eqref{estimateI1} and \eqref{estimateI2}, we obtain that 
$$
\int_0^T\langle(\nabla\cdot\partial_t\bu)_h,p\rangle_{V'\times V}
=\int_0^T \langle[\nabla\cdot\bu]_t,p_h\rangle_{V'\times V}.
$$
Moreover, we have that $(\nabla\cdot \partial_t\bu)_h = \nabla\cdot \partial_t\bu_h$.
This concludes the proof of the lemma. 
\end{proof}

We now apply the temporal mollification directly to the elasticity equation to obtain:
\begin{equation}\label{regelastic}
- \Delta \mathbf{u}_h - 2 \grad(\grad \cdot \mathbf{u}_h)=-\nabla p_h+\mathbf{F}_h.
\end{equation}
By the above discussion, this equation holds for every $t$ in the sense of $\bV'$.

We recall the bilinear form $e(\cdot,\cdot)$ associated with elasticity given in \eqref{bilform}, and the corresponding norm on $\mathbf V$ $$\|\bu\|^2_{\mathbf V}\equiv \langle \cE\bu,\bu\rangle_{V'\times V}=e(\bu,\bu).$$ 

We may test the regularized elasticity equation by $\partial_t \bu_h\in C^{\infty}([0,T]; H^1_0(\Omega))$. The pressure equation \eqref{system1}$_2$ (which holds in the sense of $L^2(0,T;V')$) may be tested against $p_h$ which is similarly smooth in time into $V$. Summing the results of these integrations, we obtain the following equality which is valid in $L^2(0,T)$ (and hence $a.e.~t$):
{\small \begin{equation}\label{RegEnergy}
\frac{1}{2}\frac{d}{dt}\|\bu_h\|^2_{\bV}
+(\nabla p_h, \partial_t\bu_h)
+\langle [\nabla\cdot\bu]_t,p_h\rangle_{V'\times V}
+(k\nabla p, \nabla p_h)
=\langle \mathbf{F}_h,\partial_t \bu_h \rangle_{\bV'\times\bV} 
+\langle S,p_h\rangle_{V'\times V}.
\end{equation}}
Upon integration in time $\int_0^Tdt$ and a temporal integration by parts we obtain:

{\small \begin{align}\nonumber
\frac{1}{2}\|\bu_h(T)\|^2_\bV\nonumber
&+\int_0^T(\nabla p_h, \partial_t\bu_h)
+\int_0^T\; \langle\nabla\cdot\partial_t\bu,p_h\rangle_{V'\times V}
+\int_0^T(k\nabla p, \nabla p_h)
\\ \nn
=&-\int_0^T\langle \partial_t\mathbf{F}_h, \bu_h \rangle_{\bV'\times \bV}
-\langle \bF_h(T),\bu_h(t) \rangle_{\bV'\times \bV}
+\langle \bF_h(0),\bu_h(0) \rangle_{\bV'\times \bV}
\\ \label{RegEnergy*}
&+\int_0^T\langle S,p_h\rangle_{V'\times V}+\frac{1}{2}\|\bu_h(0)\|_\bV^2.
\end{align}}
We observe that all terms above are well-defined for the regularity classes associated to a weak solution in the sense of Definition \ref{zeroAsol}.

\subsection{Limit Passage}
We now note convergences that will allow us to pass with the limit in the equality \eqref{RegEnergy}. 

\begin{proposition}\label{LimitReg} Suppose $(\bu,p)$ is a weak solution as in Definition \ref{zeroAsol} and $k$ is as in Assumption \ref{Assumpk}. 
The following limits hold as $h\searrow 0$:
\begin{enumerate}
\item $\ds \int_0^T(k\nabla p , \nabla p_h)\to \int_0^t||k^{1/2}\nabla p||^2$.
\item 
$\ds \int_0^T\Big ((\nabla p_h, \partial_t\bu_h)+\langle\nabla\cdot\partial_t\bu,p_h\rangle_{V'\times V}\Big ) \to 0$.
\item $\frac{1}{2}\|\bu_h(t)\|^2_\bV\to \frac{1}{2}\|\bu(t)\|^2_\bV$ ~ in ~ $C([0,T])$.
\end{enumerate}
\end{proposition}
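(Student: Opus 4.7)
The plan is to handle the three claims of Proposition~\ref{LimitReg} separately: (3) is essentially immediate from the uniform continuity of $\bu$, (1) is a standard $L^2$-mollification statement, and the only delicate item is (2), which is designed so as to exploit the asymmetric identity in Lemma~\ref{ConvolutionLemma}.

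For claim (3), since $\bu\in C([0,T];\bV)$ is uniformly continuous on the compact interval $[0,T]$, the constant-in-time extension in \eqref{extension} together with the standard mollifier estimate yields $\|\bu_h(t)-\bu(t)\|_\bV\le \omega(h)\to 0$ uniformly in $t\in[0,T]$, where $\omega$ is the modulus of continuity of $\bu$ in $\bV$. The pointwise bound
\begin{equation*}
\big|\|\bu_h(t)\|_\bV^2-\|\bu(t)\|_\bV^2\big|\le \big(\|\bu_h(t)\|_\bV+\|\bu(t)\|_\bV\big)\|\bu_h(t)-\bu(t)\|_\bV
\end{equation*}
upgrades this to the claimed $C([0,T])$-convergence of the squared norms. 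For claim (1), by the hypothesis $p\in L^2(0,T;V)$ we have $\nabla p\in L^2(0,T;L^2(\Omega))$, and Assumption~\ref{Assumpk} gives $k\nabla p\in L^2(0,T;L^2(\Omega))$. Standard temporal mollification yields $\nabla p_h\to\nabla p$ in $L^2(0,T;L^2(\Omega))$, the contributions from $(0,h)\cup(T-h,T)$ being negligible by the absolute continuity of $s\mapsto\|\nabla p(s)\|^2$. Continuity of the $L^2(0,T;L^2)$-inner product against the fixed function $k\nabla p$ finishes claim (1).

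Claim (2) is the main obstacle. Since $\bu\in C([0,T];\bV)$ and the mollifier commutes with the time derivative, $\partial_t\bu_h\in C^\infty(\mathbb R;\bV)$, in particular $\partial_t\bu_h|_\Gamma=0$. Spatial integration by parts therefore gives
\begin{equation*}
(\nabla p_h,\partial_t\bu_h) = -(p_h,\nabla\cdot\partial_t\bu_h) = -\langle\nabla\cdot\partial_t\bu_h, p_h\rangle_{V'\times V},
\end{equation*}
while Lemma~\ref{ConvolutionLemma} shifts the mollifier onto the pressure,
\begin{equation*}
\int_0^T\langle[\nabla\cdot\bu]_t,p_h\rangle_{V'\times V}\,dt = \int_0^T\langle \nabla\cdot\partial_t\bu_h, p\rangle_{V'\times V}\,dt + O(h).
\end{equation*}
Adding the two identities collapses the quantity in (2) to
\begin{equation*}
\int_0^T\langle\nabla\cdot\partial_t\bu_h, p-p_h\rangle_{V'\times V}\,dt + O(h).
\end{equation*}
Since $\nabla\cdot\partial_t\bu_h = ([\nabla\cdot\bu]_t)_h$ is the temporal mollification of the $V'$-valued function $[\nabla\cdot\bu]_t\in L^2(0,T;V')$, it is uniformly bounded (in fact convergent) in $L^2(0,T;V')$; combining this with $p_h\to p$ in $L^2(0,T;V)$ and Cauchy--Schwarz drives the expression to zero.

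The technical subtlety, and the step that will require the bulk of the work, is the convergence $p_h\to p$ in $L^2(0,T;V)$: the extension in \eqref{extension} uses $p(0),p(T)\in L^2(\Omega)$, which need not belong to $V$, so $p_h$ can fail to be $V$-valued in an $h$-layer near the temporal endpoints. My plan is to split $(0,T)=(h,T-h)\cup\big((0,h)\cup(T-h,T)\big)$: on the interior the mollification is insensitive to the constant extension and coincides with the zero-extension of $p\in L^2(\mathbb R;V)$, giving the clean $L^2((h,T-h);V)$-convergence needed for Cauchy--Schwarz; on the boundary layer, whose measure is $O(h)$, the corresponding contribution to $\int\langle\nabla\cdot\partial_t\bu_h, p-p_h\rangle$ is absorbed into the $O(h)$ error of Lemma~\ref{ConvolutionLemma} via the same density argument (approximating $p(0),p(T)$ in $L^2(\Omega)$ by $V$-elements) used in the proof of that lemma, together with the fact that $\int_{(0,h)\cup(T-h,T)}\|[\nabla\cdot\bu]_t\|_{V'}^2\,dt\to 0$ as $h\to 0$.
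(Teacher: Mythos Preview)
Your proposal is correct and follows essentially the same approach as the paper: claims (1) and (3) are handled identically, and for claim (2) both you and the paper integrate by parts to get $-\langle\nabla\cdot\partial_t\bu_h,p_h\rangle$, invoke Lemma~\ref{ConvolutionLemma}, and finish via $\|p-p_h\|_{L^2(0,T;V)}\to 0$ against a uniformly bounded $L^2(0,T;V')$ factor. Your reduction of (2) directly to the single term $\int_0^T\langle\nabla\cdot\partial_t\bu_h,p-p_h\rangle+O(h)$ is a minor streamlining of the paper's two-term split $I+II$, and your discussion of the endpoint subtlety (whether $p_h\in V$ on $(0,h)\cup(T-h,T)$ given only $p(0),p(T)\in L^2(\Omega)$) is a legitimate technical point that the paper passes over without comment.
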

\begin{proof}
The first claim is a direct consequence of the elementary properties of convolution with the standard mollifiers \cite{evans,temam,brezis}. Indeed, we note that $p\in L^2(0,T;V)$ as well as the fact that the permeability function $k(\cdot)$ is strictly bounded from below and above  by Assumption \ref{Assumpk}.

Secondly, since $p_h$ and $\partial_t\bu_h$ are sufficiently  smooth in space (owing to the fact that $\bu \in L^2(0,T;\bV)\implies \partial_t\bu_h \in L^2(0,T;\bV)$) we can directly apply  integration by parts with $\partial_t\bu_h\big|_{\Gamma} = 0$  to obtain:
$$
\int_0^T\int_{\Omega}\nabla p_h\cdot \partial_t\bu_h
=-\int_0^T\int_{\Omega} p_h(\nabla\cdot \partial_t\bu_h).
$$
With this observation, the claim reduces to:
$$
\int_0^T\; \langle \nabla\cdot\left (\partial_t\bu-\partial_t\bu_h\right ), p_h\rangle_{ V'\times V}\to 0.
$$
This is equivalent to
$$
\underbrace{\int_0^T \;\langle \nabla\cdot\left (\partial_t\bu-\partial_t\bu_h\right ), p_h- p\rangle_{ V'\times  V}}_{I}
-\underbrace{\int_0^T \langle \nabla\cdot\left (\partial_t\bu-\partial_t\bu_h\right ), p\rangle_{ V'\times  V}}_{II}\to 0.
$$
We estimate the first term in the following way:
$$
|I|\leq \underbrace{\|\nabla\cdot\left (\partial_t\bu-\partial_t\bu_h\right )\|_{L^2(0,T; V')}}_{\leq C}\underbrace{\| p- p_h\|_{L^2(0,T;V)}}_{\to 0}\to 0,
$$
where the latter convergence follows again via the standard $L^p$ mollifier property \cite{evans,brezis}. Here we have also used $\nabla\cdot \bu_t\in L^2(0,T;V')$ in Definition \ref{zeroAsol}.
For the second term, $II$,  we first use the previous Lemma \ref{ConvolutionLemma} to arrive at
$$
\int_0^T\int_{\Omega}(\nabla\cdot\partial_t\bu_h) p
=\int_0^T\langle \nabla\cdot\partial_t\bu, p_h \rangle_{V'\times V}+O(h).
$$
Therefore the integral $II$ can be treated in an analogous way as the first one:
$$
II
=\int_0^T\langle \nabla\cdot\partial_t\bu, p- p_h\rangle_{V'\times V} \to 0.
$$

Finally, let us prove the third property. By the assumption on the solution of Theorem \ref{th:main1}, we have $\bu\in C([0,T];\bV)$ and again by the standard properties of mollification \cite[Theorem. 4.21]{brezis} we have that~~
$\ds 
\bu_h\to \bu\quad {\rm strongly}\;{\rm in}\quad C([0,T];\bV)$. 
Therefore by continuity of norm we have the uniform convergence 
$$
\frac{1}{2}\|\bu_h(t)\|^2_\bV\to \frac{1}{2}\|\bu(t)\|^2_\bV\quad{\rm in}\quad C([0,T]).
$$

\end{proof}

\subsection{Concluding the Proof of Theorem \ref{th:main1}}
Now we can proceed with the proof of Theorem \ref{th:main1}. Using Proposition \ref{LimitReg} and equation \eqref{RegEnergy}, and by taking $h\to 0$, we obtain that weak solution $(\bu,p)$ from Theorem \ref{th:main1} satisfies the  energy equality:
{\small \begin{align}\label{EnergyEq1}
\frac{1}{2}\|\bu(T)\|^2_\bV\nonumber
+\int_0^T\int_{\Omega}k|\nabla p|^2
=-\int_0^T\langle \partial_t\mathbf{F}, \bu \rangle_{\bV'\times \bV}
-\langle \bF(s),\bu(s) \rangle_{\bV'\times \bV}\Big|_{s=0}^{s=T}
+\int_0^T\langle S,p\rangle_{V'\times V}+\frac{1}{2}\|\bu_0\|_\bV^2.
\end{align}}
We estimate:
\begin{align*}
\|\bu(T)\|^2_\bV
+4\int_0^T\int_{\Omega}k|\nabla p|^2
\leq&~ 
2\int_0^T\|\partial_t\mathbf{F}(s)\|^2_{\bV'} ds
+2\int_0^T\|\bu (s)\|^2_\bV ds
+2\|\bF (0)\|_{\bV'}^2+4\|\bF (T)\|_{\bV'}^2
\\
&+\|\bu_0\|_{\bV}^2
+\frac{2}{k_1}\int_0^T\|S\|_{V'}^2+2\int_0^Tk_1\|p\|^2_V.
\end{align*}
The last term on the right-hand side can be absorbed into the left-hand side. Finally, by using the Gr\"onwall { inequality}  we obtain:
{\small \begin{align}
\|\bu(T)\|^2_\bV
+2\int_0^T\int_{\Omega}k|\nabla p|^2
\leq 2\Big (\|\bF (0)\|_{\bV'}^2
+2\|\bF (T)\|_{\bV'}^2 & +2\|\bu_0\|_{\bV}^2 +\frac{1}{k_1}\int_0^T\|S\|_{V'}^2
+\int_0^T\|\partial_t\mathbf{F}\|^2_{\bV'}\Big )
e^{2T}.
\end{align}}

Since the above can be applied to {\em any} weak solution in the sense of Defintion \ref{zeroAsol} having also the additional property that $\bu \in C([0,T];\bV)$, we can apply it to the difference of two such solutions. This provides a continuous dependence estimate. The standard argument then yields uniqueness of these solutions through the above estimate, if all data and sources are identified for two weak solutions. 

This concludes the proof of Theorem \ref{th:main1}.

\begin{remark} At this juncture, uniqueness requires that {  all of the data for  $\bu_0$ vanish in order to deduce that the solution is identically zero; it is not sufficient (yet) that  only the divergence $\nabla \cdot \bu_0$ vanish to deduce that the solution is zero. }
\end{remark}

\section{Reduced Problem and Proof of Theorem \ref{th:main2}}
As mentioned above, existence of weak solutions for the linear time-dependent problem in \eqref{system1}$_{\text{lin}}$ can be obtained, for instance, from \cite{indiana} in the context of implicit equations (see also
\cite{showmono}). Here we summarize the principal operators and the reduction of the linear system to an implicit evolution equation { \eqref{abstractimplicit}}, as they are essential in the exposition and proof of Theorem \ref{th:main2}, which we give later in this section.  

\subsection{Operators and Functional Setup}\label{operators}

 \noindent \underline{Elasticity Operator}. We will define an elasticity operator in the balance-of-momentum equation to invert, and thus write the solid displacement $\bu$ as a direct function of $p$. Recall that, for $\bu \in \bV$ and a smooth function $\bv$, $$-(\text{div}~\mathbf \sigma(\bu),\mathbf v) = -(\text{div}[2\mu \varepsilon(\mathbf{u}) + \lambda (\nabla \cdot \mathbf{u})\, \mathbf{I}],\mathbf v) = e(\bu,\mathbf v).$$ Thus, if we let $\bv \in \bV$ be an arbitrary test function in \eqref{1}, we obtain the variational form of the elasticity equation \eqref{system1}$_{\text{lin}}$:
\begin{equation}\label{AVF}
e(\bu,\bv)= \int_{\Omega} p \mathbf{I} ..\epsilon(\mathbf{v}) \ d\Omega + \langle F, \mathbf{v}\rangle_{\mathbf V'\times \mathbf V} .
\end{equation}

We note that $e(\cdot, \cdot)$ is symmetric, continuous and coercive on $\mathbf V$.
If we let $f( \mathbf{v}) = \int_{\Omega} p \mathbf{I} ..\epsilon(\mathbf{v}) \ d\Omega + \langle F, \mathbf{v}\rangle_{\mathbf V'\times \mathbf V}
 $, then $f \in \bV'$ directly, as we have the following estimate:
\begin{equation}\label{estimateforfv}
| f(\bv)| \leq C \|p\|_{L^2(\Omega)} \| \epsilon(\mathbf{v})\|_{L^2(\Omega)} + C \|\mathbf F\|_{\mathbf V'}\|\mathbf{v}\|_{\bV} 
 \leq C \Big(\|p\|_{L^2(\Omega)} + \|\mathbf F\|_{\mathbf V'}\Big) \|\mathbf{v}\|_{\bV}.
\end{equation}
By direct application of Lax-Milgram, there exists unique solution $\bu = \bu(p,\bF) \in \bV$ to \eqref{AVF}. Note that even though $p \in V \subset  H^1(\Omega)$ (for all boundary conditions considered), \eqref{AVF} allows us to define $\bu$ as a function of $p$ for all $p \in L^2(\Omega)$, since $H^1(\Omega)$ is dense in  $L^2(\Omega)$ and the above estimate  \eqref{estimateforfv} {\em depends only on the $L^2(\Omega)$-norm of} $p$.

Hereafter we denote the resulting elasticity operator above by $\cE(\bu) = f$, i.e., $\cE: \bV \to \bV'$ is the linear operator determined by the bilinear form $e(\cdot, \cdot)$  on $\bV$. We have that $\cE$ is an isomorphism in this setting. We summarize the above discussion in the following lemma.
\begin{lemma}\label{elasticity} Consider the elasticity problem: 
\begin{equation} \ds \begin{cases}\label{ptodmap}
- \nabla\cdot\mathbf \sigma(\bu)= \mathbf G & ~ \text{on}~ \Omega \\
\bu = 0 &~ \text{on}~\Gamma.
\end{cases} \end{equation}
with distributed source $\mathbf G \in \mathbf V'$.
Then there exists a unique weak solution $\bu \in \bV$  \cite{kesavan,ciarlet} that satisfies 
the stability estimate
$$||\bu||_{\mathbf V} \le C || \mathbf G||_{\mathbf V'},~~\forall \mathbf u \in \mathbf V.$$

Moreover, { since we have assumed} $\Omega$ is of class $\mathcal C^2$, classical elliptic regularity applies \cite{ciarlet,temam}. Hence, if $\mathbf G \in \mathbf L^2(\Omega)$, then the solution $\mathbf u \in \mathbf H^2(\Omega)\cap \bV,$ and 
$\ds \|\bu \|_{\mathbf H^2(\Omega)} \le C||\mathbf G||_{\mathbf L^2(\Omega)}.$
 \end{lemma}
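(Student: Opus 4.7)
\textbf{Proof plan for Lemma \ref{elasticity}.} The plan is to cast the boundary value problem \eqref{ptodmap} into its weak form, apply Lax--Milgram on $\bV$, and then invoke standard elliptic regularity for the $\mathbf H^2$ statement. Concretely, testing \eqref{ptodmap} against $\bv \in \bV$ and integrating by parts (using $\bv|_{\Gamma}=0$) produces the variational identity
\begin{equation*}
e(\bu,\bv) = \langle \mathbf G, \bv\rangle_{\bV' \times \bV} \qquad \forall\, \bv \in \bV,
\end{equation*}
with $e(\cdot,\cdot)$ the symmetric bilinear form defined in \eqref{bilform}. Continuity of $e$ on $\bV \times \bV$ is immediate from Cauchy--Schwarz and the form $e(\bu,\bv) = \lambda(\nabla\cdot\bu,\nabla\cdot\bv) + 2\mu(\varepsilon(\bu),\varepsilon(\bv))$. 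Continuity of the right-hand side as a linear functional on $\bV$ is by definition of the $\bV'$-norm.

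The main (and only nontrivial) step is coercivity: we need $e(\bu,\bu) \gtrsim \|\bu\|_{\bV}^2 = \int_\Omega |\nabla \bu|^2$ for all $\bu \in \bV = \mathbf H^1_0(\Omega)$. Since $e(\bu,\bu) \geq 2\mu \int_\Omega |\varepsilon(\bu)|^2$, this reduces to Korn's inequality on $\mathbf H^1_0(\Omega)$, which gives $\|\varepsilon(\bu)\|_{\mathbf L^2(\Omega)}^2 \geq c_K \|\nabla \bu\|_{\mathbf L^2(\Omega)}^2$ by virtue of the homogeneous Dirichlet trace. Combined with the positivity of $\lambda \geq 0$, this yields coercivity. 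Lax--Milgram then delivers a unique $\bu \in \bV$, and testing the variational identity against $\bu$ itself, together with coercivity, immediately produces the stability bound
\begin{equation*}
\|\bu\|_\bV^2 \lesssim e(\bu,\bu) = \langle \mathbf G, \bu\rangle_{\bV' \times \bV} \leq \|\mathbf G\|_{\bV'} \|\bu\|_\bV,
\end{equation*}
from which $\|\bu\|_\bV \leq C \|\mathbf G\|_{\bV'}$ follows.

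For the regularity statement, under the assumption $\Omega \in \mathcal C^2$ and $\mathbf G \in \mathbf L^2(\Omega)$, the system $-\nabla\cdot\sigma(\bu) = \mathbf G$ is a uniformly elliptic linear system in the sense of Agmon--Douglis--Nirenberg (verifiable by checking the Legendre--Hadamard / strong ellipticity condition for $\sigma$, which follows from $\mu > 0$ and $\lambda + 2\mu/n > 0$), and the homogeneous Dirichlet boundary conditions satisfy the complementing condition. Classical $L^2$-based elliptic regularity for such systems then upgrades the weak solution $\bu \in \bV$ to $\bu \in \mathbf H^2(\Omega)$ with the quantitative estimate $\|\bu\|_{\mathbf H^2(\Omega)} \leq C\bigl(\|\mathbf G\|_{\mathbf L^2(\Omega)} + \|\bu\|_{\mathbf L^2(\Omega)}\bigr)$, and the lower-order $\bu$-term is absorbed via the already-established $\bV$-estimate (bounded by $\|\mathbf G\|_{\bV'} \lesssim \|\mathbf G\|_{\mathbf L^2(\Omega)}$). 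The main obstacle in the entire argument is really just verifying Korn's inequality in the form needed on $\mathbf H^1_0(\Omega)$, but this is well-documented (e.g., \cite{kesavan,ciarlet}) and essentially free on the homogeneous Dirichlet space, so one may simply cite it.
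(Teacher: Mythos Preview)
Your proposal is correct and matches the paper's approach exactly: the paper does not give a detailed proof of this lemma but rather summarizes the Lax--Milgram argument in the preceding paragraph (noting that $e(\cdot,\cdot)$ is symmetric, continuous, and coercive on $\bV$) and then cites \cite{kesavan,ciarlet,temam} for both existence and elliptic regularity. Your write-up simply fills in the standard details behind those citations.
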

 
 \noindent \underline{Pressure-to-Dilation Map}. The pressure-to-dilation map was introduced in the setting of Biot poroelasticity in \cite{auriault,showmono,show1}. Motivated by the elasticity problem in Lemma \ref{elasticity}, we define the operator $B: L^2(\Omega) \to L^2(\Omega)$ by  
\begin{equation} \label{Bdef} Bp =-\nabla \cdot \mathcal E^{-1}(\nabla p) = \nabla \cdot \bu.\end{equation}

When $p \in H^s(\Omega)$ we have that $\nabla p \in \mathbf H^{s-1}(\Omega)$ \cite{brezis,temam,kesavan}, with $p \mapsto \nabla p$ continuous in this setting. In the specific case when $p \in L^2(\Omega)$, then $\nabla p \in \mathbf H^{-1}(\Omega)=\mathbf V'$. Invoking the properties of the elliptic operator $\cE$, we see that $B \in \mathscr L(L^2(\Omega))$. 

If $p \in V$ (either $V_D$ or $V_N$), { {with $\Omega$ is smooth here}}, we have that 
$$\nabla p \in L^2(\Omega) \implies \cE(\bu) = - \nabla p \in L^2(\Omega) \implies \bu \in \mathbf H^2(\Omega) \cap \bV \implies \nabla \cdot \bu  \in \nabla \cdot \mathbf V=V_N,$$
where in the last equality we used that the divergence operator is surjective onto $L^2_0(\Omega)$, e.g. \cite[Theorem III.3.3]{galdi2011introduction}.
\begin{remark} Therefore, only in the case of purely Neumann boundary conditions for the fluid pressure, is the pressure solution space invariant under the pressure-to-dilation map. This is a key difference between the two cases considered for $V$, and has ramifications in the analysis. \end{remark}
 
We summarize the discussion of $B$ in this setting where elliptic regularity holds for the pair ($\cE$, $\Omega$) in the following lemma: 
\begin{lemma}\label{elipreg} Given $p \in V$ and $\bF\in \mathbf L^2(\Omega)$,  the corresponding solver $\cE^{-1}(-\nabla p+\bF)\in \mathbf H^2(\Omega) \cap \bV$ with associated bound. When $\bF\equiv 0$ and $p \in V$, we have $Bp = \nabla\cdot \bu \in V_N$ for $\cE(\bu)=-\nabla p$. From this we obtain  that 
$$B: V \to V_N,~~{\text{continuously}}.$$ \end{lemma}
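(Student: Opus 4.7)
My plan is to assemble the three assertions of the lemma by chaining together Lemma \ref{elasticity} with basic properties of the divergence operator and the trace theorem.

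First, assuming elliptic regularity for $(\cE,\Omega)$, I would observe that when $p \in V$ we have $\nabla p \in \mathbf{L}^2(\Omega)$ by definition of $V \subset H^1(\Omega)$. Combined with the hypothesis $\bF \in \mathbf{L}^2(\Omega)$, the right-hand side $-\nabla p + \bF$ lies in $\mathbf L^2(\Omega) \hookrightarrow \bV'$. The first claim is then an immediate invocation of the elliptic regularity conclusion of Lemma \ref{elasticity}: $\bu := \cE^{-1}(-\nabla p+\bF) \in \mathbf H^2(\Omega)\cap \bV$, with $\|\bu\|_{\mathbf H^2(\Omega)} \le C(\|\nabla p\|_{\mathbf L^2(\Omega)} + \|\bF\|_{\mathbf L^2(\Omega)}) \le C(\|p\|_V + \|\bF\|_{\mathbf L^2(\Omega)})$.

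Next, setting $\bF \equiv 0$ and taking $p \in V$, I would use the first step to conclude $\bu \in \mathbf H^2(\Omega)\cap \bV$, so that $\nabla \cdot \bu \in H^1(\Omega)$ by differentiation (the divergence maps $\mathbf H^2$ to $H^1$ continuously). To conclude $\nabla \cdot \bu \in V_N$, the remaining content is to verify the zero-mean condition:
\begin{equation*}
\int_\Omega \nabla \cdot \bu \,d\mathbf x = \int_\Gamma \bu \cdot \mathbf n \, dS = 0,
\end{equation*}
which follows from the divergence theorem together with the Dirichlet trace $\bu|_\Gamma = 0$ built into the definition of $\bV$. Hence $\nabla \cdot \bu \in H^1(\Omega) \cap L^2_0(\Omega)$, which coincides with $V_N$ per the identification noted after \eqref{Vdef}.

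Finally, continuity of $B: V \to V_N$ follows by composing continuous maps: $p \mapsto \nabla p$ is bounded $V \to \mathbf L^2(\Omega)$, $\cE^{-1}$ is bounded $\mathbf L^2(\Omega) \to \mathbf H^2(\Omega)\cap \bV$ by elliptic regularity, and $\nabla \cdot$ is bounded $\mathbf H^2(\Omega) \to H^1(\Omega)$; the range lands in $V_N$ by the mean-zero computation above, and the associated operator norm inherits the product of the three constants. I do not anticipate any real obstacle in this argument; the only point to keep straight is that the \emph{target} space $V_N$ is dictated by the Dirichlet boundary condition on $\bu$ (enforcing the mean-zero property) and is independent of whether the pressure space $V$ was $V_D$ or $V_N$---the asymmetry flagged in the preceding remark about $B$ failing to preserve $V_D$.
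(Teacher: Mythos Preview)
Your proof is correct and mirrors the paper's own argument, which appears in the discussion immediately preceding the lemma (the lemma merely summarizes that discussion). The only cosmetic difference is that where you invoke the divergence theorem directly to obtain the zero-mean property, the paper phrases this via the surjectivity of $\nabla\cdot:\mathbf H^1_0(\Omega)\to L^2_0(\Omega)$ (citing Galdi); your route is arguably more transparent for this purpose.
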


We note some important kernel and range properties of the $B$ operator \cite{show1,temam,bgsw}:
 \begin{lemma}\label{Binvert}
Considered as a mapping on $L^2(\Omega)$, $Ker(B)=\{\text{constants}\}$, and hence $B$ is injective on $L^2_0(\Omega)$ as well as on $V_N$. With respect to ranges, we have $B(L^2(\Omega)) \subseteq L^2_0(\Omega).$
Thus $B \in \mathscr L(L^2_0(\Omega))$ and $B \in \mathscr L (V_N)$. Finally, we have that $B$ is a self-adjoint, monotone operator when considered on $L^2(\Omega)$ or $L^2_0(\Omega)$.
\end{lemma}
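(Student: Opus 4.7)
My plan is to work from the variational characterization of $B$. For $p \in L^2(\Omega)$ one has $\bu_p := \cE^{-1}(-\nabla p) \in \bV$ and the identity $e(\bu_p, \bv) = (p, \nabla\cdot\bv)_{L^2(\Omega)}$ for every $\bv \in \bV$, because $-\nabla p$ acts on $\bV = \mathbf H^1_0(\Omega)$ through the duality $\langle -\nabla p, \bv\rangle_{\bV'\times\bV} = (p,\nabla\cdot\bv)$. Every conclusion of the lemma will follow from either a symmetry, coercivity, or boundary-trace property of $e(\cdot,\cdot)$ applied to this single identity.

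For the kernel and injectivity claims, I would take $\bv = \bu_p$ to obtain $e(\bu_p,\bu_p) = (p, Bp)$. If $Bp = 0$, coercivity of $e$ on $\bV$ forces $\bu_p = 0$, hence $\nabla p = -\cE(\bu_p) = 0$ in $\bV'$; invoking surjectivity of the divergence from $\bV$ onto $L^2_0(\Omega)$ (the same property already noted just before Lemma \ref{elipreg}), the vanishing of $(p,\nabla\cdot\bv)$ on all $\bv\in\bV$ forces $p$ to be orthogonal in $L^2(\Omega)$ to every zero-mean function, i.e., constant. Conversely, constants produce $\bu_p = 0$ and hence $Bp = 0$, so $\mathrm{Ker}(B) = \{\mathrm{constants}\}$; injectivity on $L^2_0(\Omega)$ and on $V_N$ is then immediate because neither subspace contains a nonzero constant. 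The range inclusion $B(L^2(\Omega))\subseteq L^2_0(\Omega)$ follows from the divergence theorem, since $\bu_p \in \mathbf H^1_0(\Omega)$ gives $\int_\Omega Bp\,d\mathbf x = \int_{\partial\Omega}\bu_p\cdot\mathbf n\,d\Gamma = 0$. Boundedness $B\in \mathscr L(L^2(\Omega))$ is gotten by composing the continuous maps $p \mapsto \nabla p \in \bV' \mapsto \bu_p \in \bV \mapsto \nabla\cdot\bu_p \in L^2(\Omega)$, and restricting delivers $B \in \mathscr L(L^2_0(\Omega))$; the statement $B \in \mathscr L(V_N)$ is precisely what Lemma \ref{elipreg} already records.

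For self-adjointness and monotonicity I would apply the same variational identity twice: testing the equation for $q$ against $\bu_p$ yields $e(\bu_q,\bu_p) = (q,\nabla\cdot\bu_p) = (q,Bp)$, and testing the equation for $p$ against $\bu_q$ yields $e(\bu_p,\bu_q) = (p,Bq)$. Symmetry of $e(\cdot,\cdot)$ then gives $(Bp,q) = (p,Bq)$ for all $p,q \in L^2(\Omega)$, and the choice $q = p$ produces $(Bp,p) = e(\bu_p,\bu_p) = \|\bu_p\|_\bV^2 \geq 0$. Both properties transfer automatically to the closed subspace $L^2_0(\Omega)$. I do not anticipate any serious obstacle in carrying this out; the one point that needs care is the kernel step, which implicitly requires $\Omega$ to be connected (an assumption in force throughout the paper) together with Lipschitz-domain surjectivity of $\nabla\cdot : \bV \to L^2_0(\Omega)$, so that $\nabla p = 0$ in $\bV'$ genuinely forces $p$ to be a constant rather than merely an element of some proper subspace.
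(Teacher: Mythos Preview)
Your proof is correct. The paper does not actually supply its own proof of this lemma: it simply states the result with a citation to \cite{show1,temam,bgsw}, so there is nothing to compare against directly. Your argument is entirely in the spirit of those references and of the surrounding material in the paper (Lemmas~\ref{elasticity} and~\ref{elipreg}, and the Bogovski\u{i}/Stokes facts quoted just before and after). The key variational identity $e(\bu_p,\bv) = (p,\nabla\cdot\bv)_{L^2(\Omega)}$ is exactly the right starting point; from it the kernel, range, self-adjointness, and monotonicity claims all fall out as you describe. Your observation that the kernel argument needs connectedness of $\Omega$ and surjectivity of the divergence onto $L^2_0(\Omega)$ is accurate, and both are available in the paper's setting (Lipschitz domain, \cite{galdi2011introduction}). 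The only point worth flagging is that $B\in\mathscr L(V_N)$ relies on the elliptic-regularity portion of Lemma~\ref{elipreg}, hence implicitly on smoothness of $\Omega$; you correctly defer to that lemma rather than reprove it.
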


\begin{remark}  $B \in \mathscr L(L^2(\Omega))$, but it need not be coercive in that setting. $B$ {\em can} be extended to a linear operator (still denoted by $B$) which lies in $\mathscr L(V_N')$. Such an extension fails for $V=V_D$, owing to the fact that for $\bu \in \mathbf H^2(\Omega) \cap \mathbf H_0^1(\Omega)$, the function $\nabla \cdot \bu$ lands in $H^1(\Omega)\cap L_0^2(\Omega)$ and not $H_0^1(\Omega)$ \end{remark}

\begin{proposition}\label{IsoB}
The operator $B$ is an isomorphism on $L^2_0(\Omega)$.
\end{proposition}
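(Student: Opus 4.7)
The plan is to upgrade the bounded, self-adjoint, monotone operator $B \in \mathscr{L}(L^2_0(\Omega))$ of Lemma \ref{Binvert} to a coercive one, so that the Lax--Milgram theorem (applied to the symmetric bilinear form $(p,q) \mapsto (Bp,q)_{L^2(\Omega)}$) yields a bounded two-sided inverse. Concretely, I aim to establish
\begin{equation*}
(Bp,p)_{L^2(\Omega)} \geq c\,\|p\|^2_{L^2(\Omega)} \qquad \forall\, p \in L^2_0(\Omega)
\end{equation*}
for some $c>0$. Equivalently, coercivity supplies closed range, and since $B$ is self-adjoint and, by Lemma \ref{Binvert}, injective on $L^2_0(\Omega)$, this forces $R(B) = L^2_0(\Omega)$ with continuous inverse.

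For the lower bound, I would unwind the definition $\bu := \mathcal{E}^{-1}(-\nabla p)$, that is,
\begin{equation*}
e(\bu,\mathbf v) = -\langle \nabla p,\mathbf v \rangle_{\mathbf V' \times \mathbf V} \qquad \forall\, \mathbf v \in \mathbf V,
\end{equation*}
and test against $\mathbf v = \bu$. Since $\bu \in \mathbf V$ has vanishing trace, the duality identity $\langle \nabla p, \bu\rangle_{\mathbf V' \times \mathbf V} = -(p, \nabla\cdot \bu)_{L^2(\Omega)}$ remains valid for $p$ merely in $L^2(\Omega)$ (by density from smooth $p$, where the usual divergence theorem applies). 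Consequently
\begin{equation*}
(Bp,p)_{L^2} = (\nabla\cdot\bu,\,p)_{L^2} = -\langle \nabla p, \bu\rangle_{\mathbf V'\times\mathbf V} = e(\bu,\bu) = \|\bu\|_{\mathbf V}^2.
\end{equation*}
To convert this into a bound in $p$, I would invoke the surjectivity $\nabla\cdot : \mathbf V \to L^2_0(\Omega)$ (the Bogovski\u{\i} lift already cited in the paragraph following Lemma \ref{elipreg}): for every $p \in L^2_0(\Omega)$ choose $\mathbf w_p \in \mathbf V$ with $\nabla\cdot \mathbf w_p = p$ and $\|\mathbf w_p\|_{\mathbf V} \leq C\|p\|_{L^2}$. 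Testing the elasticity identity against $\mathbf w_p$ then gives
\begin{equation*}
\|p\|_{L^2}^2 = (p,\nabla\cdot\mathbf w_p) = -\langle \nabla p,\mathbf w_p\rangle_{\mathbf V'\times\mathbf V} = e(\bu,\mathbf w_p) \leq \|\bu\|_{\mathbf V}\|\mathbf w_p\|_{\mathbf V} \leq C\|\bu\|_{\mathbf V}\|p\|_{L^2},
\end{equation*}
whence $\|p\|_{L^2} \leq C\|\bu\|_{\mathbf V}$; combined with the previous display this yields the desired coercivity.

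The only technical point in the plan is the duality integration-by-parts $\langle \nabla p, \bu\rangle_{\mathbf V' \times \mathbf V} = -(p,\nabla\cdot \bu)_{L^2}$ when $p$ has no Sobolev regularity, and this is settled by density together with continuity of $\nabla : L^2(\Omega) \to \mathbf H^{-1}(\Omega) = \mathbf V'$. Every other ingredient---boundedness, self-adjointness, monotonicity and injectivity of $B$ on $L^2_0(\Omega)$, along with the divergence surjectivity---is already in hand from Lemmas \ref{elasticity}--\ref{Binvert} and the surrounding remarks.
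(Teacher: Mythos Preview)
Your argument is correct. The duality identity $\langle \nabla p,\bu\rangle_{\mathbf V'\times\mathbf V}=-(p,\nabla\cdot\bu)_{L^2}$ for $p\in L^2(\Omega)$ and $\bu\in\mathbf V$ is exactly the density argument you describe, and together with the Bogovski\u{\i} lift it gives the coercivity $(Bp,p)_{L^2}\ge c\|p\|_{L^2}^2$ on $L^2_0(\Omega)$; Lax--Milgram then finishes.

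The paper's primary proof takes a different route: it observes that $q=Bp$ is equivalent to the pair $(\bu,p)$ solving the Stokes system $-\mu\Delta\bu+\nabla p=(\lambda+\mu)\nabla q$, $\nabla\cdot\bu=q$, and then invokes the classical Stokes existence/stability theorem to produce $p\in L^2_0(\Omega)$ with $\|p\|_{L^2}\le C\|q\|_{L^2}$, i.e.\ the inverse bound $\|Bp\|\ge c\|p\|$. In a subsequent remark the paper sketches an alternative closer in spirit to yours---using Bogovski\u{\i} to see that $\nabla:L^2_0(\Omega)\to\mathbf H^{-1}(\Omega)$ is an isomorphism onto its range, then concluding via self-adjointness and the Closed Range Theorem. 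Your version sharpens that sketch by computing the explicit coercivity constant through $e(\bu,\bu)=\|\bu\|_{\mathbf V}^2$ and the Bogovski\u{\i} test function $\mathbf w_p$, which avoids both the Stokes machinery and the abstract closed-range argument. The gain is that your proof is entirely self-contained from the operator lemmas already in hand; the paper's Stokes approach, on the other hand, makes transparent the connection to incompressible fluid theory that is exploited elsewhere in the analysis.
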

\proof
Let $q\in L^2_0(\Omega)$. Then, by definition of $B$, we have that $q=Bp$ if and only if there exists $\bu$ such that { $(\bu,p)\in H^1_0(\Omega)\times L^2_0(\Omega)$} is a solution to the following Stokes problem:
\begin{align*}
-\mu\Delta\bu+\alpha \nabla p=(\lambda+\mu)\nabla q\quad {  {\rm in}\;\Omega}
\\
\nabla\cdot \bu=q\quad { {\rm in}\;\Omega}
\\
{  \bu=0\quad {\rm on}\;\Gamma.}
\end{align*}
We use classical existence theorem for the Stokes equation (see e.g. \cite[Prop I.2.2. and Remark I.2.6]{temam}) to conclude that for every $q\in L^2_0(\Omega)$ there is a unique $(\bu,p)\in H^1_0(\Omega)\times L^2_0(\Omega)$ satisfying the above equation and the following estimate:
$$
\|\bu\|_{H^1(\Omega)}+\|p\|_{L^2(\Omega)}\leq C\left(\|\nabla q\|_{\mathbf H^{-1}(\Omega)}+\|q\|_{L^2(\Omega)}\right )\leq C\|q\|_{L^2(\Omega)}.
$$
Therefore, we proved $\ds \|Bp\|_{L^2(\Omega)}\geq \frac{1}{C}\|p\|_{L^2(\Omega)}$ which concludes the proof.

\qed
\begin{remark}
A more direct proof follows from the Bogovski\u{i} Theorem (e.g. \cite[Theorem III.3.3]{galdi2011introduction}) which states that the divergence is surjective operator from $H^1_0(\Omega)\to L^2_0(\Omega)$. Therefore $\nabla$ is an { {injection from $L^2_0(\Omega)$ into $\mathbf H^{-1}(\Omega)$}}. From these facts, we may deduce that the range of $B$ is closed in $L^2_0(\Omega)$, and since $B$ is self-adjoint with null kernel, the Closed Range Theorem guarantees that $B$ is an isomorphism on $L^2_0(\Omega)$. (These observations are essentially used in the proof of the existence theorem for Stokes equation, yielding Proposition \ref{IsoB}.)
\end{remark}

\noindent \underline{Diffusion Operator $A(t)$}. For $k \in L^{\infty}(\mathbb R)$, we can define for each $z \in L^2(0,T;L^2(\Omega))$ the linear operator $A(t): V \to V'$ through the bilinear form
\begin{equation}
A[p,q; k(z)]= (k(z)\nabla p,\nabla q),~~\forall~p,q \in V.
\end{equation}
If $k$ and $z$ are given and smooth, then we have an unbounded operator $A(t): L^2(\Omega) \to L^2(\Omega)$ with domain $\mathcal D(A(t)) = H^2(\Omega) \cap V$ and action given by 
\begin{equation}\label{Aaction} A(t) p = -\nabla \cdot[ k (z) \nabla p],~~\forall p \in \mathcal D(\Omega).\end{equation}
When $k\equiv ~const$, $A(t)=A$ is a multiple of the standard  Laplacian (Dirichlet, Neumann, or mixed, depending on $V$) defined on $H^2(\Omega) \cap V$. 

In the above setting, for a given $z \in L^2(0,T;L^2(\Omega))$, the bilinear form $A[\cdot, \cdot; k(z)]$ continuous, coercive, and symmetric on $V$.\\[.1cm]
\noindent \underline{Translation to Eliminate Source $\bF$}.
Note that it is sufficient to solve the linear problem \eqref{system1}$_{\text{lin}}$ with $\bF \equiv 0$ by a translation argument. Indeed, as the elasticity equation is elliptic and $\bF \in H^1(0,T;\mathbf V')$, for a.e. ~$t \in [0,T]$ we can define 
\begin{equation}\label{ellipticsolver}\bu_{\bF}(t) = \cE^{-1}(\mathbf F(t)) \in \bV.\end{equation} Thus we have that $\bu_{\bF} \in H^1(0,T; \bV)$. Then, considering the variable $\bw = \bu-\bu_{\bF}$, we note that $\bu$ solves  \eqref{system1}$_{\text{lin}}$ if and only if $\bw$ solves
\begin{equation}\label{abstractform*}
\begin{cases}
\cE(\bw)=-\nabla p  &~ \in L^2(0,T; \mathbf V')\\
\nabla \cdot \bw_t+A(t) p=S+\nabla \cdot \bu_{\bF,t}&~ \in L^2(0,T;V')\\
\nabla\cdot\bw(0)=d_0-\nabla \cdot \bu_{\bF}(0)&~ \in L^2(\Omega).
\end{cases}
\end{equation}
Hence, by re-scaling $S \in L^2(0,T;V')$ and $d_0= \zeta(0) \in L^2(\Omega)$, we obtain an equivalent linear problem for a given $z$ with $\bF \equiv 0$.

\subsection{Reduced Problem}
Finally, using the pressure to dilation operator introduced above, we equivalently reformulate \eqref{system1}$_{\text{lin}}$  with $\bF \equiv 0$ (as in \cite{BW21})  as the initial boundary value problem
\vskip-.2cm
\begin{equation}\label{weakpforgiventildep} 
\begin{cases}
[Bp]_t -\nabla \cdot [k(z)\nabla p] = S, & \in L^2(0,T;V')\\
Bp(0)= d_0, & \in V'.
\end{cases}
\end{equation}
We define a weak solution to \eqref{weakpforgiventildep}---which is valid for both $V= V_D$ or $V_N$---as follows:
\begin{definition}\label{WeakSol} Given $z \in L^{2}(0,T;L^2(\Omega))$,
we say that $p \in L^2(0,T; V)$ with $[Bp]' \in  L^2(0,T; V')$ is a weak solution for (\ref{weakpforgiventildep}) provided that
\begin{enumerate}
\item For every $q \in  V$, 
\begin{equation}\label{varform1}
\dfrac{d}{dt} ( Bp , q )  + A[p, q;k(z)]=  \langle S, q \rangle_{V' \times V} .
\end{equation}
\item $\big[Bp\big](0) = d_0 \in V'$ in the sense of $C([0,T];V')$.
\end{enumerate}
\end{definition}
\noindent Note that since $Bp \in L^2(0,T; V)$ and $[Bp]' \in  L^2(0,T; V')$, we have that $Bp \in C([0,T]; L_0^2(\Omega))$ and thus the initial condition above is well-defined. 

As mentioned in the beginning of the section, the existence of a weak solution is obtained, e.g., in \cite{indiana}. We thusly have the following theorem:
\begin{theorem}\label{F_welldefined}
Let  Assumption \ref{Assumpk} be in force, $S \in L^2(0,T;V')$ and $d_0 \in  L_0^2(\Om)$. Then \eqref{weakpforgiventildep} has a weak solution, according to Definition \ref{WeakSol}. \end{theorem}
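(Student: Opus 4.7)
The plan is to recognize \eqref{weakpforgiventildep} as an abstract implicit (degenerate) evolution equation of the form $[Bp]_t + A(t) p = S$ on the Gelfand triple $V \subset L^2(\Omega) \subset V'$, where $B \in \mathscr{L}(L^2(\Omega))$ is bounded, self-adjoint, and monotone (Lemma \ref{Binvert}), and $A(t)$ is the time-dependent, symmetric bilinear form $A[\cdot,\cdot;k(z)]$ on $V$. By Assumption \ref{Assumpk}, $A(t)$ is coercive and bounded uniformly in $t$ with constants $k_1, k_2 > 0$, and is measurable in $t$ because $z \in L^2(0,T;L^2(\Omega))$ with $k$ continuous. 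This precisely fits the abstract framework of \cite[Chapter III.3]{showmono} (and \cite{indiana}), from which the existence statement can be quoted. I will nonetheless outline a self-contained Galerkin construction.

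First, I would select a countable basis $\{w_j\}_{j \geq 1}$ of $V$ (e.g., eigenfunctions of the Laplace operator with the appropriate boundary conditions), and seek approximate solutions $p_n(t) = \sum_{j=1}^n c_j^n(t) w_j \in V_n := \mathrm{span}\{w_1,\dots,w_n\}$ satisfying the projected identity
\begin{equation*}
\tfrac{d}{dt}(Bp_n, w_j) + A[p_n, w_j; k(z)] = \langle S, w_j\rangle_{V'\times V}, \qquad j=1,\dots,n.
\end{equation*}
Since $\mathrm{Ker}(B) = \{\text{constants}\}$ and $V$ contains no nonzero constants (either by the Dirichlet trace on $\Gamma_D$ or by the zero-mean condition when $\Gamma_D = \emptyset$), the mass matrix $M_{ij} := (Bw_i, w_j)$ is symmetric positive definite on each $V_n$, so the above is a well-posed linear ODE system on $[0,T]$ once initial values are selected. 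Testing against $p_n$ and using self-adjointness and monotonicity of $B$ together with coercivity of $A$ yields the energy estimate
\begin{equation*}
\tfrac{1}{2}(Bp_n(t),p_n(t)) + k_1 \int_0^t \|p_n\|_V^2 \, ds \leq \tfrac{1}{2}(Bp_n(0), p_n(0)) + \tfrac{1}{2k_1}\int_0^t \|S\|_{V'}^2 \, ds,
\end{equation*}
which produces uniform bounds on $p_n$ in $L^2(0,T;V)$ and on $(Bp_n, p_n)$ in $L^\infty(0,T)$. Reading the equation then controls $[Bp_n]_t$ in $L^2(0,T;V')$.

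The limit passage is then standard: extract a subsequence with $p_n \rightharpoonup p$ in $L^2(0,T;V)$; by continuity of $B$ on $L^2$, $Bp_n \rightharpoonup Bp$ in $L^2(0,T;L^2)$ and $[Bp_n]_t \rightharpoonup [Bp]_t$ in $L^2(0,T;V')$; passing to the limit in each Galerkin equation recovers \eqref{varform1} for every $q$ in the dense span $\bigcup_n V_n \subset V$, hence for every $q \in V$. The Lions--Magenes embedding applied to $Bp \in L^2(0,T;L^2) \hookrightarrow L^2(0,T;V')$ with $[Bp]_t \in L^2(0,T;V')$ yields $Bp \in C([0,T];V')$, so $Bp(0)$ makes sense in $V'$. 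The main obstacle, and the step most sensitive to the degenerate nature of the problem, is initializing the scheme compatibly with the fact that only $Bp(0)$---not $p(0)$---is prescribed. Here I would invoke Proposition \ref{IsoB}: writing $d_0 = Bp_0$ with $p_0 \in L^2_0(\Omega)$, I would choose $p_n(0) \in V_n$ with $p_n(0) \to p_0$ in $L^2(\Omega)$, which secures $Bp_n(0) \to d_0$ strongly in $L^2(\Omega) \hookrightarrow V'$ and, after limit passage, yields $Bp(0) = d_0$ in $V'$, completing the proof.
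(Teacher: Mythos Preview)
Your proposal is correct and in fact goes beyond what the paper does. The paper gives no proof of this theorem at all: it simply states that existence follows from \cite{indiana} (see also \cite[Chapter III.3]{showmono}), which is precisely what you do in your opening paragraph. Your supplementary Galerkin outline is sound---the key observation that $\mathrm{Ker}(B)\cap V=\{0\}$ (so the Gram matrix $(Bw_i,w_j)$ is positive definite on each $V_n$) is correct in both the Dirichlet/mixed and Neumann cases, and the energy estimate and limit passage are standard. One minor point: the theorem as stated allows $d_0\in L^2(\Omega)$, whereas your use of Proposition~\ref{IsoB} to write $d_0=Bp_0$ requires $d_0\in L^2_0(\Omega)$; this is harmless since $\nabla\cdot\mathbf u$ for $\mathbf u\in\mathbf H^1_0(\Omega)$ always has zero mean, so the natural data automatically lie in $L^2_0(\Omega)$ (and indeed the paper's subsequent Theorem~\ref{th:main2} imposes $d_0\in L^2_0(\Omega)$ explicitly).
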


\subsection{Estimates for Reduced Problem \eqref{weakpforgiventildep}}
In this section we derive two a priori estimates for the reduced problem (as above) {\em with initial data only given in terms of ~$[Bp](0)$}. The first, a formal estimate, will hold on approximants, and any constructed solution therefrom will inherit this bound. We will then show: for any weak solution to \eqref{varform1} $p \in L^2(0,T;V)$ and  $Bp \in H^1(0,T;V')$ taking only $[\nabla \cdot \bu](0)=d_0 \in L^2_0(\Omega)$, we can infer  the additional property that  $\bu \in C([0,T];\bV)$ for $Bp = \nabla \cdot \bu$. Putting these two facts together will allow us to markedly improve Theorem \ref{th:main1} by eliminating an unnecessary requirement on the data, as well as showing that the solution is unique, with the additional property that $\bu \in C([0,T];\bV)$. 

The principle issue with this task is that $B$ is not isomorphism on $L^2(\Omega)$ because $Ker(B)=\R$. In what follows we extensively use the fact that $L^2(\Omega) \equiv \R \oplus L^2_0(\Omega)$. We denote by $\cP:L^2(\Omega)\to L^2_0(\Omega)$ the orthogonal projection on $L^2_0(\Omega)$ which is given by the standard formula:
\begin{align}
\label{DefP}
\cP f=f-\frac{1}{|\Omega|}\int_{\Omega}f.
\end{align}
Let us also define a symmetric bilinear form on $L^2(\Omega)$ (using self-adjointness of $B$)
$$
\beta(p,q):= (Bp,q)_{L^2(\Omega)}=(p,Bq)_{L^2(\Omega)},\quad p,q\in L^2(\Omega).
$$
By Lemma \ref{Binvert}, $|p|_B:=\sqrt{\beta(p,p)}$ is a semi-norm on $L^2(\Omega)$. With this notation we can re-write the weak form \eqref{varform1} equivalently as
\begin{align}\label{varform2}
\frac{d}{dt} \big[\beta \left (p(t),q\right )\big]+A[p(t),q;k(z)]=\langle S(t),q \rangle_{V'\times V}\quad {\rm in}\; \mathcal{D}'(0,T),\; q\in V. 
\end{align}

We now consider the two cases, $V= V_D$ or $V_N$ separately (recall the definition in \eqref{Vdef}, and that $V_D$ includes the mixed case). In each case below there are two main steps: (i) to show an improved, formal energy estimate (valid for approximants), and (ii) to show that, a posteriori, any weak solution as in Definition \ref{zeroAsol} has the additional property that $\bu \in C([0,T];\bV)$.
\subsubsection{Neumann Case: $V=V_N$} \label{Neumanncase}
In the (purely) Neumann case, we have $H \equiv L^2_0(\Omega)$ and $V = H^1(\Omega)\cap L^2_0(\Omega)$. Therefore, by Proposition \ref{IsoB}, we have in this case that $\beta(\cdot,\cdot)$ is in fact a scalar product on $H$, and by the standard polarization identity, it is equivalent to the $L^2(\Omega)$ scalar product. 
\begin{remark} It is worth noting that this approach is essentially used in \cite{auriault}. There, $\beta(\cdot,\cdot)$ is an equivalent inner product on $L^2(\Omega)$ since Dirichlet boundary conditions are taken with $c_0 > 0$. In that case, when $A(t)=A$ (constant), one obtains a unique weak solution $p \in L^2(V_D)$ if $p(0)$ or $Bp(0)$ is specified. Alternatively, using a modified, implicit semigroup approach, the same result can be obtained (as well as generalization to stronger solutions) \cite{indiana,show1} for $c_0\ge0$. However, when $A(t)$ is truly time-dependent and $c_0=0$, uniqueness requires additional assumptions \cite{showmono}. Moreover, as we shall see in the next section, we must work harder to permit specification of data as $Bp(0)$, since $B$ is not, in general, invertible {on $L^2(\Omega)$} nor does $\beta(\cdot,\cdot)$  induce a true inner product there. 
\end{remark}

 Now, by taking $p$ as a formal test function in \eqref{varform2} and integrating in time, we immediately obtain the estimate:
\begin{align}\label{EnergyPN}
||\beta(p,p)||_{L^{\infty}(0,T)}+\|p\|^2_{L^{2}(0,T;V)}
\leq C\left (\|S\|^2_{L^2(0,T;V')}+\|p(0)\|^2_{L^2(\Omega)} \right ).
\end{align}

Finally, by norm/inner-product equivalence, $$c||p(0)||^2_{L^2(\Omega)}  \le |p(0)|_B^2=\beta(p(0),p(0))=(Bp(0),p(0)) \le C ||Bp(0)||^2_{L^2(\Omega)}.$$ We have, in addition, that $||p||_{L^{\infty}(0,T;L^2(\Omega))}^2 \le C ||\beta(p,p)||_{L^{\infty}(0,T)}.$ Thus for any weak solution constructed from approximants (obeying \eqref{EnergyPN}) we obtain the energy estimate: \begin{equation}\label{duhduh} ||\beta(p,p)||_{L^{\infty}(0,T)}+\|p\|^2_{L^{2}(0,T;V)}
\leq C\left (\|S\|^2_{L^2(0,T;V')}+\|Bp(0)\|^2_{L^2(\Omega)} \right ).\end{equation}

Now let us suppose that  $p$ is any weak solution (that is, not necessarily satisfying \eqref{duhduh}).  We obtain that $[Bp]_t \in L^2(0,T;V')$ directly from the definition of weak solution in Definition \ref{WeakSol}, with 
$$||Bp_t||_{L^2(0,T;V')}^2 \lesssim ||S||_{L^2(0,T;V')}+||\nabla p||^2_{L^2(0,T;L^2(\Omega))}.$$ Moreover, by boundedness of $B$ on $V$ we know that $Bp \in L^2(0,T;V)$, since, as a weak solution, $p \in L^2(0,T;V)$. Thus by the standard interpolation result for Bochner spaces \cite{evans,ciarlet} for the triple $H^1(\Omega) \cap L^2_0(\Omega) = V \subset L^2_0(\Omega) \subset V'$, we infer that $Bp\in C([0,T];L^2_0(\Omega))$ and then by the invertibility of $B$ on $L^2_0(\Omega)$ as shown above in Lemma \ref{Binvert} we obtain that $p \in C([0,T];L^2_0(\Omega))$. 
Now, since $\nabla p \in \mathbf H^{-1}=\bV'$ (by the characterization of $\mathbf H^{-1}(\Omega)$), the corresponding elasticity equation ~$
\cE(\bu)=-\nabla p$
is satisfied in $\bV'$ for every $t\in [0,T]$. Therefore, interpreting the equation variationally through $e(\cdot,\cdot)$, we have $\bu(t)\in\bV,\; t\in [0,T]$ with:
\begin{align}\label{ellipticU}
\|\bu(t)\|_{\bV}
\leq C\|\nabla p(t)\|_{\bV'}
\leq C\|p(t)\|_{L_0^2(\Omega)}.
\end{align}
Therefore we have proven $\bu\in C([0,T];\bV)$ and hence every weak solution satisfies assumptions of Theorem \ref{th:main1}. Moreover, since any weak solution satisfies the hypotheses of Theorem \ref{th:main1}---namely that $\bu \in C([0,T];\bV)$---all weak solutions are in fact unique. Finally, since we have constructed a weak solution that satisfies the estimate \eqref{duhduh}, using Section \ref{operators}, we may translate back to the full problem; we deduce, then that the unique weak solution as in Definition \eqref{zeroAsol} satisfies the final estimate \eqref{Thm2Estimate}, only assuming that $Bp(0)=\nabla \cdot \bu(0) \in L^2_0(\Omega)$ is given as data.

\begin{remark}
In the Neumann case we can formally integrate the second equation of \eqref{system1}$_{lin}$ (or equivalently \eqref{weakpforgiventildep}$_1$),and  use the the divergence theorem to obtain the following necessary condition for the existence of solution: $\int_{\Omega}S=0$. In Theorem \ref{F_welldefined} this condition is contained in assumption $S\in L^2(0,T;V_N')$. Informally, the functionals from $L^2(0,T;V_N')$ only "see" mean free part of the function since
$$
\int_{\Omega}Sq=\int_{\Omega}\mathcal{P}Sq,\quad S\in L^2(\Omega),\; q\in V_N.
$$
Formally, since $V_N$ is not dense in $L^2$, functionals from $V_N'$ cannot be extended to $L^2$ in a unique way and therefore $L^2$ cannot be embedded in $V_N'$.

\end{remark}

\subsubsection{Mixed Case}
The same results as above hold for the mixed case $V=V_D$, but the proof is more subtle, as $B$ is not an isomorphism on $H=L^2(\Omega)$ in this case. We use the fact that kernel of $B$ over $L^2(\Omega)$ is one-dimensional, as well as the  fact that the elasticity equation for $\bu$ does not ``see" additive constants.

The first step is again to formally take the solution $p$ as a test function in \eqref{varform2} and integrate $\int_0^t$ to obtain the following formal equality (valid on approximants):
\begin{equation}\label{L2est}
||\beta(p,p)||_{L^{\infty}(0,T)}+\int_0^t A[p(s),p(s);k(z(s))]ds=\int_0^t \langle S(s),p(s) \rangle_{V'\times V}ds+|p(0)|^2_{B}.
\end{equation}
The last term will be critical to estimate, since $Bp(0)$ is the given initial condition rather than $p(0)$ here, and $B$ is not invertible as before. We calculate
 \begin{equation}
|p(0)|^2_B
=\left (Bp(0),p(0)\right )_{ L^2(\Omega)}
=\left (Bp(0),\cP p(0)\right )_{ L^2(\Omega)}
\end{equation}
where we have used the assumption that $Bp(0) \in L^2_0(\Omega)$ and used orthogonality to obtain the above equality. We now note that $\ds ||\mathcal Pp(0)|| \le C||B\mathcal Pp(0)||$, since $\mathcal Pp(0) \in L^2_0(\Omega)$ and, as before, $B$ is an isomorphism on this space (see proof of Proposition \ref{IsoB}). Moreover, we have $Bp(0)=B\mathcal Pp(0)$ for all $p \in L^2(\Omega)$. Thus:
\begin{equation}
\left (Bp(0),\cP p(0)\right )_{ L^2(\Omega)} \leq C\|Bp(0)\| \|B\cP p(0)\|
\leq C\|Bp(0)\|_{ L^2(\Omega)}^2.\end{equation}
Since $Bp(0)$ is given as data in $L^2_0(\Omega)$, we deduce that the LHS of \eqref{L2est} is bounded by data, as in \eqref{duhduh}.

Now, again suppose that $p \in L^2(0,T;V_D)$ is any weak solution with $d_0 \in L^2_0(\Omega)$.
 Since $B$ is not an isomorphism here, we cannot proceed in the same way as we did in the previous case to obtain that $\partial_t Bp$ lies in a suitable dual space. As a weak solution, we have immediately that $Bp_t \in L^2(0,T;V_D')$ and $Bp \in L^2(0,T;H^1(\Omega)\cap L^2_0(\Omega))$ (considering the range of $B$ in Lemma \ref{elipreg}). But, by restricting test functions to $V_D \cap L^2_0(\Omega) \subseteq  V_D$ in the weak form \eqref{varform1} and estimating directly, we obtain that  $Bp_t \in L^2(0,T;[V_D\cap L^2_0(\Omega)]')$. Again, by interpolation of $V_D\cap L^2_0(\Omega) \subseteq L^2_0(\Omega) \subseteq [V_D\cap L^2_0(\Omega)]'$, we obtain that $Bp \in C([0,T];L^2_0(\Omega)$. However, at this stage, we know only that $p \in L^2(0,T;V_D)$, and thus direct ``inversion" of $B$ to obtain the result is not possible as before. 
 
 On the other hand, we note that $\mathcal Pp \in L^2(0,T;V_D\cap L_0^2(\Omega))$ and that $Bp=B\mathcal Pp$ (as before).
Therefore, we obtain $\cP p\in C([0,T];L^2_0(\Omega))$ (with associated estimate). 
Finally, by the definition of $\mathcal P$, we observe that $\nabla p=\nabla \cP p$, and therefore  again conclude that  the elasticity equation is satisfied for every $t\in [0,T]$. Analogous to the Neumann case, we then obtain $\bu(t)\in \bV$, and estimate \eqref{ellipticU} again holds. The final conclusion and estimate follows as does the conclusion of the Neumann case as at the end of Section \ref{Neumanncase}.
This concludes the proof of Theorem \ref{th:main2}.

\section{Nonlinear Problem}\label{nonlinearsec}
In this section we utilize the preceeding constructions and estimates to obtain the existence of a weak solution in the sense of Definition \ref{zeroAsol} to the {\em nonlinear problem} \eqref{system1}. This constitutes the proof of Theorem \ref{th:main3},
providing the first direct fixed point construction of solutions to the quasilinear Biot problem.

\subsection{Fixed Point Map}
We consider the abstract problem in \eqref{system1}$_{\text{lin}}$, for a given $z \in L^2(0,T;L^2(\Omega))$ which yields $A(t) = -\nabla \cdot [k(z(t))\nabla (\cdot)]$, which is defined $a.e.~t \in [0,T]$. For emphasis, we re-write the problem here, including an auxiliary variable $\zeta$ which will allow us to more clearly perform the fixed point argument. Recall that the space $V$ is interpreted in a case-dependent way \eqref{Vdef}, but the argument below does not distinguish between these cases. For data
$$\mathbf F \in H^1(0,T;\bV')\cap L^2(0,T;\mathbf L^2(\Omega)),~S \in L^2(0,T;V'),~d_0\in L_0^2(\Omega)$$
consider the problem

\begin{equation}\label{abstractform2}
\begin{cases}
\cE(\bu)=-\nabla p + \bF &~ \in L^2(0,T; L^2(\Omega))\\
\zeta_t-\nabla \cdot [k(z(t))\nabla p] =S&~ \in L^2(0,T;V')\\
\zeta = \nabla \cdot \bu &~ \in L^2(0,T;V_N)\\
[\nabla\cdot\bu](0)=d_0 \in L_0^2(\Omega) .
\end{cases}
\end{equation}

By Theorem \ref{th:main1}, the above linear problem (with the associated regularity of data) has a {\em unique weak solution} written here as $(\bu(z),\zeta(z),p(z))$. Let us define the following mapping:
$$\mathscr F: L^2(0,T;L^2(\Omega)) \to L^2(0,T;L^2(\Omega)),\ \ \text{given by}\ \ 
\mathscr F(z) = \zeta(z),$$ where $\zeta(z)=\nabla \cdot \bu(z)$ comes from the unique solution to \eqref{abstractform2} for the given $z$.

\begin{lemma} The map $\mathscr F$ introduced above is well-defined on $L^2(0,T;L^2(\Omega))$. This follows from existence and uniqueness of solution to this linear problem \eqref{system1}$_{\text{lin}}$. \end{lemma}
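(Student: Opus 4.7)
The plan is to verify that for every admissible input $z \in L^2(0,T;L^2(\Omega))$, the formula $\mathscr F(z) = \nabla \cdot \bu(z)$ produces a \emph{single} element of $L^2(0,T;L^2(\Omega))$. This decomposes into three checks: (a) the linear problem \eqref{abstractform2} is in the scope of Theorem \ref{th:main2}; (b) the resulting $\bu(z)$ has a divergence that actually lives in $L^2(0,T;L^2(\Omega))$; (c) the map is single-valued, i.e.\ the weak solution is unique so that $\mathscr F(z)$ does not depend on any selection procedure.

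First, I would check that the coefficient $k(z(\mathbf x,t))$ satisfies the hypotheses needed for the linear theory. Because Assumption \ref{Assumpk} supplies the uniform bounds $0<k_1 \le k(y) \le k_2$ for all $y\in\R$, the composition $(\mathbf x,t)\mapsto k(z(\mathbf x,t))$ automatically lies in $L^\infty(\Omega\times(0,T))$ with the same bounds, regardless of how rough $z$ is. Thus the bilinear form $A[\,\cdot\,,\,\cdot\,;k(z)]$ on $V$ is uniformly coercive and continuous in $t$, so the coefficient framework of Section \ref{operators} and the existence result Theorem \ref{F_welldefined} apply to the reduced problem with this $k(z)$.

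Second, combining the translation of Section \ref{operators} (eliminating $\bF$) with Theorem \ref{th:main2}(i), we obtain, for any $d_0\in L_0^2(\Omega)$, $\bF\in H^1(0,T;\bV')$, $S\in L^2(0,T;V')$, a weak solution $(\bu(z),p(z))$ with
\[
\|\bu(z)\|_{L^\infty(0,T;\bV)}^2+\|p(z)\|_{L^2(0,T;V)}^2+\|[\nabla\cdot\bu(z)]_t\|_{L^2(0,T;V')}^2 \lesssim \|d_0\|^2+\|S\|_{L^2(0,T;V')}^2+\|\bF\|_{H^1(0,T;\bV')}^2.
\]
In particular, $\bu(z)\in L^\infty(0,T;\bV)$ so $\zeta(z)=\nabla\cdot\bu(z)\in L^\infty(0,T;L^2(\Omega))\hookrightarrow L^2(0,T;L^2(\Omega))$. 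This establishes that the candidate image $\mathscr F(z)$ belongs to the target space.

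Third, and this is the step which requires invoking our main linear contribution, I need uniqueness of the weak solution so that $\mathscr F(z)$ is defined unambiguously. Theorem \ref{th:main2}(ii) guarantees that any weak solution to \eqref{system1}$_{\text{lin}}$ in the sense of Definition \ref{zeroAsol} automatically has the additional regularity $\bu\in C([0,T];\bV)$, which is precisely the hypothesis required to activate Theorem \ref{th:main1}. Applying that theorem to the difference of two weak solutions with identical data then yields the continuous dependence estimate \eqref{placeholder1}, forcing the two solutions to coincide. Hence the assignment $z\mapsto (\bu(z),p(z))$ is single-valued, and so is $\mathscr F(z)=\nabla\cdot\bu(z)$. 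The only delicate point is the second one — one cannot immediately say $\zeta(z)\in L^2(0,T;L^2(\Omega))$ from the definition of weak solution alone, which only gives $\zeta\in L^2(0,T;L^2)$ with $\zeta_t\in L^2(0,T;V')$; it is the improved bound from Theorem \ref{th:main2}(i) that supplies the needed target-space membership uniformly in $z$. Everything else is a direct citation of the results of Sections 2--4.
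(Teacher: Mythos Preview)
Your proposal is correct and follows the same reasoning the paper intends: the paper does not actually give a proof of this lemma beyond the sentence embedded in its statement, so your expansion into the three checks (coefficient admissibility, target-space membership, uniqueness) is exactly the unpacking of ``this follows from existence and uniqueness of solution to \eqref{system1}$_{\text{lin}}$.''

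One small point: your ``delicate'' step (b) is not actually delicate. Definition~\ref{zeroAsol} already requires $\zeta=\nabla\cdot\bu\in L^2(0,T;L^2(\Omega))$ as part of what it means to be a weak solution, so membership in the target space is immediate once existence is established; you do not need the improved $L^\infty(0,T;\bV)$ bound from Theorem~\ref{th:main2}(i) for well-definedness (that bound is used later for compactness, not here). The genuinely nontrivial ingredient is your step (c), uniqueness, and you have cited the correct chain Theorem~\ref{th:main2}(ii) $\Rightarrow$ Theorem~\ref{th:main1} for that.
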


Note that  a fixed point of $\mathscr F$ would yield the existence of a weak solution to the nonlinear problem \eqref{system1}. 
\begin{lemma}
Suppose $\overline z \in L^2(0,T;L^2(\Omega))$ is a fixed point of $\mathscr F$. Then $(\bu(\overline z),\overline z,p(\overline z))$ is a weak solution to \eqref{abstractform2}, and thus we have a weak solution to \eqref{system1} (as in Definition \ref{zeroAsol}).
\end{lemma}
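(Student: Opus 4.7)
The proof is essentially a substitution: the fixed point identity collapses the linearized problem \eqref{abstractform2} into the nonlinear problem \eqref{system1}, so that the unique linear weak solution produced by Theorem \ref{th:main2} is automatically a nonlinear weak solution. First I would unpack the definition of $\mathscr F$: by construction $\mathscr F(z) = \zeta(z) = \nabla\cdot\bu(z)$, so the hypothesis $\mathscr F(\overline z) = \overline z$ reads
$$\overline z = \nabla\cdot\bu(\overline z) \quad \text{in } L^2(0,T;L^2(\Omega)).$$
Substituting this identity into the permeability coefficient of the linear pressure equation gives $k(\overline z(t,x)) = k(\nabla\cdot\bu(\overline z)(t,x))$, which is precisely the nonlinear coefficient prescribed in \eqref{system1}.

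Next I would check each requirement of Definition \ref{zeroAsol} one by one. The regularity $(\bu(\overline z), p(\overline z)) \in L^2(0,T;\mathbb V)$, together with $\zeta = \nabla\cdot\bu(\overline z) \in L^2(0,T;L^2(\Omega))$ and $\zeta_t \in L^2(0,T;V')$, is inherited directly from the linear existence statement, namely Theorem \ref{th:main2} applied to the data $(d_0, \bF, S)$ with permeability $k(\overline z)$. The initial condition $\zeta(0) = d_0$ in the sense of $C([0,T];V')$ is built into the definition of weak solution for \eqref{abstractform2}. Finally, combining the variational form of the elasticity equation with the variational form of the pressure equation in \eqref{abstractform2}, tested against an arbitrary pair $(q,\bv)\in\mathbb V$, produces
\begin{align*}
e(\bu(\overline z),\bv) + \big(\nabla p(\overline z),\bv\big) + \big(k(\overline z)\nabla p(\overline z),\nabla q\big) + \tfrac{d}{dt}\big(\nabla\cdot \bu(\overline z),q\big) = \langle \bF,\bv\rangle_{\bV'\times\bV} + \langle S,q\rangle_{V'\times V},
\end{align*}
and replacing $k(\overline z)$ by $k(\nabla\cdot\bu(\overline z))$ in the diffusion term yields exactly the nonlinear variational identity \eqref{weakform2}.

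There is no genuine obstacle within this lemma itself: it is a tautological consequence of how the fixed-point mapping $\mathscr F$ was defined, with Theorem \ref{th:main2} providing the sole nontrivial input---uniqueness of the linear solution, so that $(\bu(\overline z), p(\overline z))$ is unambiguously determined and has the regularity demanded by Definition \ref{zeroAsol}. The actual difficulty of Theorem \ref{th:main3} lies upstream, in producing the fixed point $\overline z$ in the first place; that will require a Schauder-type compactness argument exploiting the elliptic regularity bound $\bu \in L^2(0,T;\mathbf H^{2}(\Omega))$ (which in turn justifies the strengthened hypothesis $\bF\in L^2(0,T;\mathbf L^2(\Omega))$), giving compactness of the range of $\mathscr F$ in $L^2(0,T;L^2(\Omega))$ via the divergence.
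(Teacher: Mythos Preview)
Your proof is correct and essentially matches the paper's approach, with the caveat that the paper provides no proof at all for this lemma: it is stated and then the text immediately moves on to ``We will apply Schauder's fixed point theorem.'' The paper evidently regards the lemma as self-evident from the construction of $\mathscr F$, and your write-up simply makes explicit the substitution argument the paper leaves implicit. One small discrepancy: the paper's surrounding text credits Theorem~\ref{th:main1} (rather than Theorem~\ref{th:main2}) for the uniqueness that makes $\mathscr F$ well-defined, but this is cosmetic since both results together (or the Corollary) are what is actually needed.
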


We will apply Schauder's fixed point theorem.

\subsection{Applying Schauder's Theorem}
We proceed to establish a fixed point by employing the subspace version of Schauder directly. 

\begin{theorem}\label{th:supporting}
The mapping $\mathscr F: L^2(0,T;L^2(\Omega)) \to  L^2(0,T;L^2(\Omega))$  has a fixed point.
\end{theorem}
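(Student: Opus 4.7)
The plan is to apply Schauder's fixed point theorem: produce a nonempty, closed, bounded, convex set $K \subset L^2(0,T;L^2(\Omega))$ with $\mathscr F(K) \subset K$, show that $\mathscr F|_K$ is continuous, and show that $\mathscr F(K)$ is relatively compact in $L^2(0,T;L^2(\Omega))$. First I would exploit the uniformity of the a priori bound \eqref{Thm2Estimate} in Theorem \ref{th:main2}: for any $z \in L^2(0,T;L^2(\Omega))$, the right-hand side constant depends only on the lower bound $k_1$ from Assumption \ref{Assumpk} and on the data $(d_0, S, \bF)$, not on $z$. In particular,
\[
\|\mathscr F(z)\|_{L^2(0,T;L^2(\Omega))} = \|\nabla \cdot \bu(z)\|_{L^2(0,T;L^2(\Omega))} \leq \|\bu(z)\|_{L^\infty(0,T;\bV)} \leq R
\]
for some $R = R(d_0, S, \bF)$. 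Taking $K$ to be the closed ball of radius $R$ gives the required invariant convex set.

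\textbf{Relative compactness.} Here the extra hypothesis $\bF \in L^2(0,T;\mathbf L^2(\Omega))$ is used. Since $\nabla p(z) \in L^2(0,T;\mathbf L^2(\Omega))$ (because $p(z) \in L^2(0,T;V)$), the elasticity equation $\cE(\bu(z)) = -\nabla p(z) + \bF$ has right-hand side in $L^2(0,T;\mathbf L^2(\Omega))$; elliptic regularity via Lemma \ref{elasticity} gives $\bu(z) \in L^2(0,T;\mathbf H^2(\Omega))$ with bound uniform for $z \in K$, hence $\mathscr F(z) = \nabla \cdot \bu(z) \in L^2(0,T;H^1(\Omega))$ uniformly. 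The weak form delivers $[\mathscr F(z)]_t \in L^2(0,T;V')$ uniformly, since $k(z)\nabla p(z)$ is uniformly bounded in $L^2(0,T;\mathbf L^2(\Omega))$. The Aubin--Lions lemma applied to the triple $H^1(\Omega) \subset\subset L^2(\Omega) \subset V'$ then yields relative compactness of $\mathscr F(K)$ in $L^2(0,T;L^2(\Omega))$.

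\textbf{Continuity.} The hard part will be establishing continuity of $\mathscr F$ on $K$; this is precisely where the novel uniqueness result from Theorem \ref{th:main2} becomes essential. Given $z_n \to z$ in $L^2(0,T;L^2(\Omega))$ with $z_n \in K$, I would first pass to a subsequence with $z_n \to z$ a.e. in $\Omega \times (0,T)$; by continuity of $k$ and the bounds $k_1 \le k(z_n) \le k_2$, dominated convergence gives $k(z_n) \to k(z)$ strongly in $L^q(\Omega \times (0,T))$ for every $q < \infty$. Using the uniform bounds from the previous steps, extract a further subsequence with $\bu_n := \bu(z_n) \rightharpoonup \bu_*$ in $L^2(0,T;\mathbf H^2(\Omega))$, $p_n := p(z_n) \rightharpoonup p_*$ in $L^2(0,T;V)$, and $\nabla \cdot \bu_n \to \nabla \cdot \bu_*$ strongly in $L^2(0,T;L^2(\Omega))$ by the Aubin--Lions argument. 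Strong-times-weak convergence identifies the limit of $k(z_n)\nabla p_n$ as $k(z)\nabla p_*$ in $L^2(0,T;\mathbf L^2(\Omega))$, so $(\bu_*, p_*)$ satisfies the weak form \eqref{weakform2} with permeability $k(z)$ and initial condition $d_0$. By Theorem \ref{th:main2} together with its uniqueness corollary, this solution is unique, so $\nabla \cdot \bu_* = \mathscr F(z)$. Since the limit is independent of the extracted subsequence, the entire sequence $\mathscr F(z_n)$ converges to $\mathscr F(z)$ in $L^2(0,T;L^2(\Omega))$, giving continuity.

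With invariance, continuity, and relative compactness verified, Schauder's theorem yields a fixed point $\overline z = \mathscr F(\overline z) \in K$; the pair $(\bu(\overline z), p(\overline z))$ is then the desired weak solution to the nonlinear system \eqref{system1}, inheriting \eqref{Thm2Estimate} from Step~1 and the $L^2(0,T;\mathbf H^2(\Omega))$ bound from Step~2. The whole argument depends critically on uniqueness for \eqref{system1}$_{\text{lin}}$ without time-smoothness of the permeability; prior to Theorem \ref{th:main2}, this step was the barrier that forced \cite{BW21} to use multi-valued fixed point theory and forced \cite{bgsw,cao} into Rothe/discretization constructions.
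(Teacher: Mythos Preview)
Your proposal is correct and follows essentially the same approach as the paper's proof: Schauder's theorem, with compactness via Aubin--Lions (using elliptic regularity to place $\zeta=\nabla\cdot\bu$ in $L^2(0,T;H^1(\Omega))$ and the weak form to bound $\zeta_t$ in $L^2(0,T;V')$), and continuity via weak limit passage in the pressure equation combined with the linear uniqueness result to identify the limit. The only cosmetic differences are that the paper leaves the invariant ball implicit and handles the nonlinear term $k(z_n)\nabla p_n$ by restricting to $W^{1,\infty}$ test functions and using density (with the Nemytskii property in $L^2$), whereas you use an a.e.~subsequence plus dominated convergence to upgrade $k(z_n)\to k(z)$ in every $L^q$, $q<\infty$, and pass the limit directly---your route is slightly cleaner but not substantively different.
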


\begin{proof}[Proof of Theorem \ref{th:supporting}] We must characterize the image of $\mathscr F$, and demonstrate compactness and continuity of the map. 

Let $d_0 \in L_0^2(\Omega)$, $\bF \in H^1(0,T;\bV')\cap L^2(0,T;\mathbf L^2(\Omega))$, and $S \in L^2(0,T; V')$ be given. 
We consider the mapping  $\mathscr F: L^2(0,T;L^2(\Omega)) \to L^2(0,T;L^2(\Omega))$ defined above. 
By the estimates for {\em linear solutions} as established in Theorem \ref{th:main2}, and a posteriori, by satisfying \eqref{zeroAsol}, we have and that for each $z \in L^2(0,T;L^2(\Omega))$ and $\zeta = \mathscr F(z)$
$$\zeta \in L^2(0,T;V), \ \ \text{and} \ \ \zeta_t \in L^2(0,T;V'),$$ with associated estimates.

\vskip.2cm
\noindent \underline{Continuity.}  Let $z_n \to z \in L^2(0,T;L^2(\Omega))$, $\zeta_n=\mathscr F(z_n)$. We want to prove that $\zeta_n$ has a (strong) limit point $\zeta = \mathscr F(z)$.

 First, by Assumption \ref{Assumpk}, the function $k(\cdot)$ considered as Nemytskii operator, has the property that $k(z_n) \to k(z) \in L^2(0,T;L^2(\Omega))$---see \cite{cao, bgsw} for more discussion. Now, since $\zeta_n = \mathscr F(z_n)$, for the unique $Bp_n =\zeta_n$~
 we have by definition of $\mathscr F$, the estimates that provide a uniform-in-$n$ bound on the quantities
 $$||p_n||_{L^2(0,T;V)},~~||p_n||_{L^{\infty}(0,T;L^2(\Omega))},~~||\beta(p_n,p_n)||_{L^{\infty}(0,T)}.$$
 From the bound on $p_n$ in $L^2(0,T;V)$ we extract a weak subsequential limit point, i.e., $p_{n_k} \rightharpoonup p \in L^2(0,T;V).$ From this and the continuity of $B \in \mathscr L(L^2(0,T;L^2(\Omega))),$ we obtain  that ~$\zeta_{n_k} =Bp_{n_k} \rightharpoonup Bp.$ We define this latter quantity as $\zeta := Bp,$ and hence $\zeta_{n_k} \rightharpoonup \zeta.$ In addition, we obtain from the weak form, and the uniqueness of limits ensure that (perhaps passing to a further subsequence with the same label),
 $\zeta_{n_k} \rightharpoonup \zeta \in H^1(0,T; V').$
 
  We want to show that $\zeta = \mathscr F(z)$, and this is accomplished by passing with the limit on the subsequence $n_k$ in the weak formulation \eqref{varform1}. To that end, let us again consider the weak form evaluated on $n_k$, and restrict our spatial test functions to $q \in L^2(0,T;V) \cap L^{\infty}(0,T;W^{1,\infty}(\Omega))$:
\begin{equation}\label{thisoneagain}
\int_0^T \big\langle \zeta_{n_k}'(t), q(t) \big \rangle \ dt + \int_0^T A[p_{n_k}(t), q(t);z_{n_k}(t)]\ dt = \int_0^T \langle S(t), q(t) \rangle \ dt.
\end{equation}
  Limit passage on the first term on the LHS is immediate, identifying weak limits in the weak form. For the second term, more care must be taken. Consider:
  \begin{equation}\label{duh}\int_0^T\big(k(z_{n_k})\nabla p_{n_k},\nabla q(t)\big)dt = \int_0^T\big([k(z_{n_k})-k(z)]\nabla p_{n_k},\nabla q(t)\big)dt +\int_0^T(k(z)\nabla p_{n_k},\nabla q(t)) dt.\end{equation}
  The first term on the RHS is handled through the Nemytskii property of $k(\cdot)$:
  \begin{align*} \int_0^T([k(z_{n_k})-k(z)]\nabla p_{n_k},q(t)) dt \le &~ C(||q||_{L^{\infty}(0,T;W^{1,\infty}(\Omega))})||k(z_{n_k})-k(z)||_{L^2(0,T;L^2(\Omega))}|| p_{n_k}||_{L^2(0,T;V)}\\
  \le & ~C(q,||p||_{L^2(0,T;V)})||k(z_{n_k})-k(z)||_{L^2(0,T;L^2(\Omega))} \to 0,
  \end{align*}
  by the uniform bound on $p_{n_k}$ in $L^2(0,T;V)$. 
  Convergence of the second term in \eqref{duh} is immediate, since by the boundedness of $k$ we have $k(z)\nabla q \in L^2(0,T; L^2(\Omega))$; thence, $\nabla p_{n_k} \rightharpoonup \nabla p \in L^2(0,T;L^2(\Omega))$. 
  
  Thus, we have shown that for $q \in L^2(0,T;V) \cap L^{\infty}(0,T;W^{1,\infty}(\Omega))$ 
 $$ \int_0^T(k(z_{n_k})\nabla p_{n_k},\nabla q(t)) dt \to \int_0^T(k(z)\nabla p,\nabla q(t)) dt,$$
 and hence, passing to the limit as $k\to \infty$ in \eqref{thisoneagain} we obtain for $\zeta=\zeta(z)$ the identity
   \begin{equation}\int_0^T\langle\zeta_t,q\rangle dt +\int_0^T(k(z)\nabla p,\nabla q(t))dt = \int_0^T\langle S,q(t)\rangle dt \end{equation}
   for all $q \in L^2(0,T;V) \cap L^{\infty}(0,T;W^{1,\infty}(\Omega))$, the latter being dense in $L^2(0,T;V)$. Thus we have shown that $(\zeta(z),p(z))$ satisfies the weak form of the pressure equation and hence we have constructed a weak solution $(\zeta(z),p(z))$ for $z \in L^2(0,T;L^2(\Omega))$. Obtaining the initial condition is also immediate from the definition of $\mathscr F$. Hence $\zeta_n$ has a {\em weak subsequential limit point $\zeta = \mathscr F(z)$}. 
   
   To conclude the continuity of $\mathscr F$, we must improve the convergence of $\zeta_{n_k} \to \zeta$ to that of strong in $L^2(0,T;L^2(\Omega))$. This is done via the Lions-Aubin compactness theorem (see, for instance, \cite{showmono}). In addition to the estimates in Theorem \ref{th:main2} for the sequence $p_{n_k}$, we obtain two additional uniform-in-$k$ estimates from continuity of $B: V \to H^1(\Omega)$ and from satisfying the weak form of the pressure equation, namely:
   \begin{align}
   ||\zeta_{n_k}||_{L^2(0,T;H^1(\Omega))}^2 =&~ ||Bp_{n_k}||_{L^2(0,T;H^1(\Omega))}^2 \lesssim ||p||_{L^2(0,T;V)}^2\\
   \|[\zeta_{n_k}]'\|_{L^2(0,T;V')} =&~  \|[Bp_{n_k}]'\|_{L^2(0,T;V')}^2 \lesssim   \|p\|_{L^2(0,T;V)}^2 + \|S\|_{L^2(0,T;V')}^2.
\end{align}
By possibly passing to a further subsequence $n_{k_m}$ (not affecting the previous steps in establishing the weak solution or associated estimates), we improve the convergence of $\zeta_{n_{k_m}} \to \zeta \in L^2(0,T;L^2(\Omega))$. \\

\noindent \underline{Compactness.} We must show that the range of $\mathscr F$ is relatively compact in $L^2(0,T;L^2(\Omega))$. But, as in the previous step, this will follow from the Lions-Aubin compactness criterion. Indeed,  for $\zeta =\mathscr F(z)$, $\zeta$ corresponds to a weak solution satisfying the above estimates. In particular, we obtain for any such $\zeta(z)$ there is an associated $(p(z),\bu(z))$ such that:
\begin{align}
||\zeta||^2_{L^2(0,T;H^1(\Omega))} \le C||p||^2_{L^2(0,T;V)} \le C\big[||d_0||^2_{L^2(\Omega)}+||S||_{L^2(0,T;V')}^2] \\
\|\zeta'\|^2_{L^2(0,T;V')} \le C\big[  \|p\|^2_{L^2(0,T;V)} + \|S\|^2_{L^2(0,T;V')}\big] \le C\big[||d_0||^2_{L^2(\Omega)}+||S||_{L^2(0,T;V')}^2\big].
\end{align}
A subset of $L^2(0,T;L^2(\Omega))$ which is bounded as in the previous two estimates is relatively compact by the Lions-Aubin criterion, and hence $\zeta =\mathscr F(z)$ lies in a compact set. This is the final hypothesis to be satisfied for applying the Schauder fixed point theorem.

Doing so, and applying Schauder's point theorem, yields the existence of a function $z \in L^2(0,T;H^1(\Omega)) \cap H^1(0,T;V')$ and an associated weak solution $(\zeta(z),p(z))$ for which $z = \mathscr F(z)$.
\end{proof}

\begin{remark} We again note that, owing { to} the presence of the nonlinearity, regularity of the solution $\zeta$---in particular of $\nabla \cdot \bu$---needs to be better than $L^2(0,T;L^2(\Omega))$. This is because we must obtain compactness in $\zeta$ to utilize the Nemytskii property of $k(\cdot)$. Moreover, if $d_0 \in V'$ only, this would preclude our ability to obtain such regularity, as this would seem to lower the evolution of $Bp=\nabla \cdot \bu$ to the regularity of $V'$.
\end{remark}

\small

\end{document}